\documentclass[14pt,a4paper,twoside]{amsart}
\usepackage{mathpazo}
\usepackage{amssymb}
\usepackage{mathtools}
\usepackage{parskip}
\usepackage{amsthm}
\usepackage{cite}
\usepackage{enumerate}
\usepackage{hyperref}
\usepackage{cleveref}
\usepackage{csquotes}
\usepackage{hyperref}
\usepackage{tikz}
\usepackage{tikz-cd}
\usepackage{xcolor}
\hypersetup{
	colorlinks=true,
	linkcolor=blue,
	filecolor=blue,
	urlcolor=blue,
	citecolor=blue,
	pdfpagemode=Fullscreen
}
\newtheorem{theorem}{Theorem}[section]
\newtheorem{lemma}[theorem]{Lemma}
\newtheorem*{Acknowledgement}{\textnormal{\textbf{Acknowledgement}}}
\theoremstyle{definition}

\newtheorem{corollary}[theorem]{Corollary}
\newtheorem{proposition}[theorem]{Proposition}
\newtheorem{remark}[theorem]{Remark}

\numberwithin{equation}{section}

\title{Derivations of plane algebroid curves}

\subjclass[2020]{13F25, 13N15, 13N05}
\keywords{Algebroid curve, Derivation, value semi-group}
\date{}
\author{Sagnik Chakraborty$^1*$}	
\address{$^1$Department of Mathematics, 
	Ramakrishna Mission Vivekananda Educational and Research Institute, 
	Belur Math,  Howrah - 711 202,
	West Bengal, India.}	
\email{jusagnik28@gmail.com}

\author{R. V. Gurjar$^2$}
\address{$^2$Department of Mathematics, IIT Bombay, Powai, Mumbai – 400 076, Maharashtra, India.}
\email{rv.gurjar50@gmail.com} 

\author{Madhuparna Pal$^3$}
\address {$^3$Department of Mathematics,
	Ramakrishna Mission Vivekananda Educational and Research Institute, 
	Belur Math,  Howrah - 711 202,
	West Bengal, India.}	
\email{honey.madhu777p@gmail.com}

\begin{document}

	\maketitle
\iffalse	
	\author{Sagnik Chakraborty$*$}\\	
	\address{Department of Mathematics, 
		Ramakrishna Mission Vivekananda Educational and Research Institute, 
		Belur Math,  Howrah - 711 202,
		West Bengal, India.}	
	\email{Email: jusagnik28@gmail.com}
	
	\author{R. V. Gurjar}\\
	\address{Department of Mathematics, IIT Bombay, Powai, Mumbai – 400 076, Maharashtra, India.}
	\email{Email: rv.gurjar50@gmail.com} 
	
	\author{Madhuparna Pal}\\
	\address {Department of Mathematics,
		Ramakrishna Mission Vivekananda Educational and Research Institute, 
		Belur Math,  Howrah - 711 202,
		West Bengal, India.}	
	\email{Email: honey.madhu777p@gmail.com}
\fi	
	
	\begin{abstract}
    In this paper, we study the derivations of an irreducible plane algebroid curve $R:=\frac{k[[X,Y]]}{(f)}$, which is not regular. Since the normalization of $R$ is isomorphic to $k[[t]]$, every $k$-derivation of $R$ is induced from a $k$-derivation of $k[[t]]$, which are of the form $a(t)\frac{d}{dt}$ for some $a(t)\in k[[t]]$. We establish a lower bound on the $t$-order of a nonzero power series $a(t)$ for $a(t)\frac{d}{dt}$ to induce a derivation of $R$ in terms of the multiplicity of $R$. We also prove a related result for the value semi-group of such curves.

	\end{abstract}

\footnote{$*$ corresponding author}
    
	\iffalse
%
	
	\fi
	
	\section{Introduction}
	
	Let $ k $ be an algebraically closed field of characteristic zero, and let $k[[X,Y]]$ be the formal power series ring in two variables over $k$. By an irreducible {\it plane algebroid curve} (over $k$), we mean the quotient ring $R \coloneqq \frac{k[[X, Y]]}{(f)}$, where $f\in k[[X,Y]]$ is an irreducible power series. Throughout this article, we assume that $ R $ is not regular, i.e., the order of $ f $ is at least two. The integral closure of $R$ in its field of fractions, denoted by $\overline{R}$, is a complete {\it discrete valuation ring}, which is isomorphic to the power series ring $k[[t]]$, where $t$ corresponds to a {\it uniformizing parameter} of $\overline R$. Let $\mathit v$ be the valuation of $\overline R$. The {\it $t$-order} of a nonzero element $r\in R$, denoted by $\text{ord}_t(r)$, is defined as the smallest power of $t$, which appears in $r$ when we write it as a power series in $t$. Note that the $t$-order of a nonzero element $r$ is the same as $v(r)$.\\
    If we write $R=k[[x,y]]$, where $x,y$ are the residue classes of $X,Y$, respectively, then, without loss of generality, we may assume that $n:=v(x)\le v(y)$. Since $R$ is not regular, $n>1$. Replacing $y$ with $y-g(x)$ for a suitable polynomial $g$, we can further assume that $v(y)=m$ for some $m>n$ which is not a multiple of $n$. Therefore, after taking a suitable automorphism of $\overline R=k[[t]]$ and replacing $y$ with a nonzero scalar multiple of $y$, we can write $x,y$ as $x=t^n$ and $y=t^m+\textit{higher-degree terms}$. This gives a parametric representation of $R$ as $R=k[[t^n,t^m+\textit{higher degree-terms}]]$ in $\overline{R}=k[[t]]$, which is called a {\it Puiseux parametric representation} of $R$.\\
    %It is well-known that $\text{Der}_kR$, {\it the module of $k$-derivations of $R$, is generated by two elements as an $ R $-module}. There are several proofs of this result; for example, see Theorem 1.1 of \cite{Ps}.
    The {\it conductor ideal} of $ R $, denoted by $\mathfrak C_R$, is of the form $t^ck[[t]]$ for some positive integer $c >1$. In particular, $t^{c+i}\in R$ for every non-negative integer $i$. By a result of A. Seidenberg (see the Theorem on page 168 of \cite{Se}), every $k$-derivation of $R$ uniquely extends to a $k$-derivation of $\overline{R}= k[[t]]$. Now, it is easy to see that every $k$-derivation of $k[[t]]$ is of the form $a(t)\frac{d}{dt}$ for some $a(t)\in k[[t]]$. By definition, $a(t)\frac{d}{dt}$ induces a $k$-derivation of $R$ iff $a(t)\frac{dr}{dt}\in R$ for all $r\in R$; or equivalently, iff $a(t)\frac{dx}{dt},a(t)\frac{dy}{dt}\in R$. Therefore, describing all $k$-derivations of $R$ is equivalent to finding all power series $a(t)\in k[[t]]$ such that $a(t)\frac{dx}{dt},a(t)\frac{dy}{dt}\in R$.\\ 
    Since $R$ contains $t^{c+i}$ for all nonnegative integers $i$, it is easy to see that if the $t$-order of $a(t)$ is at least $c-n+1$, then the $k$-derivation $a(t)\frac{d}{dt}:k[[t]]\to k[[t]]$ induces a $k$-derivation of $R$. However, if the $t$-order of $a(t)$ is smaller than $c-n+1$, then, in general, it is difficult to determine whether $a(t)\frac{d}{dt}$ induces a $k$-derivation of $R$ or not. Since $n>1$, if the constant term of $a(t)$ is nonzero, then $a(t)\frac{d}{dt}(t^n)\notin R$. Therefore, if $a(t)\frac{d}{dt}$ induces a nonzero $k$-derivation of $R$, then the $t$-order of $a(t)$ must be at least $1$. We are interested in finding the minimum element of $\{\text{ord}_t(a)\ |\ a(t)\frac{d}{dt}\text{ induces a nonzero $k$-derivation of $R$}\}$.\\
    If we can write $x=t^n$ and $y=t^m$ (where $m,n$ are relatively prime due to the irreducibility of $f$), then $t\frac{d}{dt}$ induces a $k$-derivation of $R$. By a slight abuse of terminology, we will call such irreducible power series $f$ {\it quasi-homogeneous} (see {\it preliminaries} for the definition). The main result of this paper (Theorem \ref{main}), proved in \cref{sec:3}, is that {\it if an irreducible power series $ f \in k[[X,Y]]$ of order $n>1$ is not quasi-homogeneous, then for every $k$-derivation $a(t)\frac{d}{dt}$ of $ R $, the $t$-order of $ a(t)$ is at least $n+1$.} %Note that if $f = X^n - Y^m$, where $n<m$ are relatively prime positive integers, then $t\frac{d}{dt}$ is a $k$-derivation of $R$. This explains why we assume that $f$ is not quasi-homogeneous. 
    Our result shows that $ R $ does not admit any $ k $-derivation of the form $a(t)\frac{d}{dt}$, where the $t$-order of $a(t)$ is small, unless $ f $ is quasi-homogeneous. In particular, it shows that if $R$ is not regular and $t\frac{d}{dt}$ induces a derivation of $R$, then $f$ must be quasi-homogeneous.\\
    Recall that the {\it value semi-group} of $R$, denoted by $v(R)$, is defined as $v(R):=\{v(r) \ |\ r\in R\setminus\{0\}\}$. This is an important tool for studying the nature of plane curve singularities. Note that $v(R)$ is a submonoid of $(\mathbb N,+)$, the set of all non-negative integers under addition. Since $t^{c+i}\in R$ for all $i\ge 0$, $\mathbb N\setminus v(R)\subseteq \{1,2,\ldots,c-1\}$ is a finite set.\\ 
    We prove a result about the value semi-group of a plane algebroid curve in \cref{sec:4}. O. Zariski proved in \cite[Theorem 4]{1} that {\it if $f$ is not quasi-homogeneous, then $f$ has a parametric representation of the form $x=t^n$, $y=t^m+bt^\lambda+\textit{higher-degree terms}$, where $b\in k$ is nonzero and $\lambda>m$ is such that $\lambda+n\notin v(R)$}. We show that (Corollary \ref{4cor}) {\it in a parametric representation of $R$ as above, if we, moreover, assume that $v(R)$ is not generated by $m,n$ and $\lambda$ is not a multiple of $\text{g.c.d.}(m,n)$, then $\lambda+2n\notin v(R)$}. It is used to construct a family of plane algebroid curves for which the inequality proved in \cref{sec:3} is a strict inequality (Proposition \ref{4prop2}). Finally, we state an open problem that could pave the way to further exploration of the nature of derivations of plane algebroid curves.%Note that if $v(R)$ is not generated by $m$ and $n$, then $f$ is not quasi-homogeneous.
	
	\section{Preliminaries}

\iffalse

\fi

By a field $k$, we mean an algebraically closed field $k$ of characteristic zero. We use $k[X_1,\dots, X_n]$ (respectively, $k[[X_1,\dots,X_n]]$) to denote the polynomial ring (respectively, the formal power series ring) in $n\ge 2$ variables over $k$.
%It is well-known that $k[X_1,\dots,X_n]$ and $k[[X_1,\dots,X_n]]$ are Noetherian unique factorization domains of (Krull) dimension $n$, and the latter one is a regular local ring. 
We will also use the symbol $k^{[n]}$ (respectively, $k^{[[n]]}$) to denote the polynomial ring (respectively, the formal power series ring) in $n$ variables over $k$, especially in situations where we do not want to specify the variables at the outset. The unique maximal ideal of $k^{[[n]]}$ will be denoted by $\mathfrak M_n$. A finite sequence of elements $X_1,\dots,X_n\in \mathfrak M_n$ is called a {\it set of variables} if $k^{[[n]]}=k[[X_1,\ldots,X_n]]$. 
%It is well-known that every $k$-algebra automorphism of $k^{[[n]]}$ maps a set of variables to a set of variables; and if $X_1,\dots,X_n\in k^{[[n]]}$ is a set of variables, then $X'_1,\dots,X'_n\in k^{[[n]]}$ is another set of variables iff there exists a $k$-algebra automorphism $\sigma:k^{[[n]]}\to k^{[[n]]}$ such that $\sigma(X_i)=X'_i$ for $1\le i\le n$, or equivalently, iff there exists an $n\times n$ invertible matrix $\mathtt M:=(\mathtt M_{ij})_{1\le i,j\le n}$ over $k$ such that $X'_i-\sum_{j=1}^n\mathtt M_{ij}X_j\in \mathfrak M_n^2$ for all $i=1,\dots,n$.\\
The {\it order} of a nonzero element $f\in k^{[[n]]}$ is defined as the largest non-negative integer $i$ such that $f\in \mathfrak M_n^i$. This is also called the {\it multiplicity} of the quotient ring $\frac{k^{[[n]]}}{(f)}$. Note that a $k$-algebra automorphism of $k^{[[n]]}$ does not change the orders of its nonzero elements. If $X_1,\dots,X_n$ is a set of variables of $k^{[[n]]}$, then every nonzero element $f\in k^{[[n]]}$ can be uniquely written as an infinite formal sum $f=\sum_{j=0}^\infty f_j$, where each $f_j$ is either zero or a homogeneous polynomial of degree $j$ in $X_1,\dots,X_n$. Clearly, the order of $f$ is the smallest non-negative integer $i$ such that $f_i\neq 0$.\\%equal to the degree of the homogeneous polynomial of the smallest degree that appears in $f$.\\
%Recall that $U,V\in k[[X,Y]]$ is called {\it a pair of variables} if there exists a $k$-algebra automorphism $\sigma:k[[X,Y]]\to k[[X,Y]]$ such that $\sigma(X)=U$ and $\sigma(Y)=V$. It is well-known that $U,V$ is a pair of variables iff we can (uniquely) write $U,V$ as $U=aX+bY+U'$ and $V=cX+dY+V'$, for some $U',V'\in \mathfrak m^2$ and $a,b,c,d\in k$ such that $ad-bc=1$; or equivalently, iff $U,V\in \mathfrak m$ and $k[[X,Y]]=k[[U,V]]$. We will also use the symbol $k^{[2]}$ (respectively, $k^{[[2]]}$) to denote the polynomial ring (respectively, the formal power series ring) in two variables over $k$, especially in situations where we do not intend to specify the variables at the outset.\\ %The field of complex numbers is denoted by $\mathbb C$, and we use $\mathbb C\{x,y\}$ to denote the convergent power series ring in two variables over $\mathbb C$.\\
Recall that a nonzero polynomial $f\in k[X,Y]$ is called a {\it quasi-homogeneous polynomial} (in $X,Y$, with {\it weights} $a,b$, which are positive integers) if $f(X^a,Y^b)$ is a homogeneous polynomial in $X,Y$. We state a standard result for quasi-homogeneous polynomials in two variables, leaving its proof as an exercise for the interested reader.
\begin{lemma}
\label{quasi-hom}
    Let $k$ be an algebraically closed field of characteristic zero. Let $f\in k[X,Y]$ be an irreducible polynomial whose order, as an element of $k[[X,Y]]$, is at least two. If $f$ is quasi-homogeneous, then it is a binomial, i.e., there exist (relatively prime) positive integers $m,n\ge 2$ such that $f=Y^n-X^m$.
\end{lemma}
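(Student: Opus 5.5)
The plan is to reduce to a homogeneous polynomial in two new monomial variables and then use the fact that a binary form over an algebraically closed field splits into linear factors. The statement should be read up to multiplying $f$ by a nonzero constant and rescaling $X,Y$ by nonzero constants, since the hypotheses are invariant under these operations.

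First I will normalize the weights. Suppose $f(X^a,Y^b)$ is homogeneous of degree $d$. Then every monomial $X^iY^j$ occurring in $f$ satisfies $ai+bj=d$. Dividing $a,b,d$ by $\gcd(a,b)$, we may assume $\gcd(a,b)=1$. The integer solutions of $ai+bj=d$ then form an arithmetic progression $(i,j)=(i_0+bs,\ j_0-as)$ for $s\in\mathbb Z$. Let $(i_0,j_0)$ be the solution with smallest $i$ among the monomials that actually occur, and let $s$ range over $0\le s\le S$. Then
\[
f=X^{i_0}Y^{j_0-aS}\sum_{s=0}^{S}c_s\,X^{bs}Y^{a(S-s)}=X^{i_0}Y^{j_1}\,h(X^b,Y^a),
\]
where $j_1=j_0-aS\ge 0$ and $h(U,V)=\sum_s c_sU^sV^{S-s}$ is a homogeneous polynomial of degree $S$.

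Next I use irreducibility. If $i_0>0$ or $j_1>0$, then $X$ or $Y$ divides $f$. Irreducibility would then force $f$ to be an associate of $X$ or of $Y$, which has order $1$ and is excluded. So $f=h(X^b,Y^a)$. Since $k$ is algebraically closed, $h=\prod_{l=1}^{S}(\alpha_lU-\beta_lV)$, so $f=\prod_l(\alpha_lX^b-\beta_lY^a)$. None of these factors is a unit, hence $S=1$ and $f=\alpha X^b-\beta Y^a$. If $\beta=0$, then $f$ is a constant times $X^b$, which is irreducible only when $b=1$, giving order $1$; this is impossible. The case $\alpha=0$ is symmetric. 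So $\alpha\beta\ne0$, and rescaling $X$ and $Y$ by suitable roots (available since $k$ is algebraically closed) and multiplying $f$ by a constant gives $f=Y^a-X^b$.

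Finally, the order of $Y^a-X^b$ is $\min(a,b)$, so order at least two gives $a,b\ge2$. Coprimality was already arranged, but it is also forced directly. If $g=\gcd(a,b)>1$, write $Y^a-X^b=(Y^{a/g})^g-(X^{b/g})^g=\prod_{\zeta^g=1}(Y^{a/g}-\zeta X^{b/g})$, which is reducible. Setting $n=a$ and $m=b$ gives the statement.

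The only step requiring care is the first one: the bookkeeping showing that after removing the monomial factor, $f$ is a binary form in $X^b,Y^a$. The key point there is that coprimality of the weights makes the exponent pairs on the line $ai+bj=d$ differ by multiples of $(b,-a)$. Everything after that is immediate.
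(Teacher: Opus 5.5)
The paper gives no proof of this lemma (it is explicitly left as an exercise), so there is no argument to compare yours against. Your proof is correct and complete.

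Your opening caveat is genuinely needed. Read literally, the statement fails for $f=2Y^2-X^3$, and the paper only applies the lemma up to a change of variables.

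You do not need to multiply $f$ by a constant, though. From $f=\alpha X^b-\beta Y^a$ with $\alpha\beta\neq 0$, the substitution $X\mapsto pX$, $Y\mapsto qY$ with $p^b=-1/\alpha$ and $q^a=-1/\beta$ gives exactly $Y^a-X^b$.

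Two small bookkeeping points. First, $S\ge 1$ holds because $f$ is not a constant. Second, the case $\beta=0$ (or $\alpha=0$) cannot actually arise, since by your choice of $i_0$ and $S$ both $c_0$ and $c_S$ are nonzero.

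Finally, your argument works verbatim if irreducibility is taken in $k[[X,Y]]$ rather than in $k[X,Y]$. That is the sense in which the paper uses the lemma, for generators of prime ideals of $k^{[[2]]}$. The argument carries over because $X$, $Y$, each $\alpha_lX^b-\beta_lY^a$, and each $Y^{a/g}-\zeta X^{b/g}$ lie in the maximal ideal, so they are non-units in the power series ring too.
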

If $f\in k^{[[2]]}$ is an irreducible power series of order $>1$, then the plane algebroid curve $R:=\frac{k^{[[2]]}}{(f)}$ is called {\it quasi-homogeneous} if $R$ is isomorphic to $k[[t^n,t^m]]$ as a $k$-algebra for some relatively prime positive integers $m,n\ge 2$; or equivalently, if there exists a pair of variables $X,Y\in k^{[[2]]}$ such that $(f)=(Y^n-X^m)$ for some positive integers $m,n\ge 2$ (Since $f$ is irreducible, $m$ and $n$ must be relatively prime.). In view of {\it lemma \ref{quasi-hom}}, $R$ is quasi-homogeneous iff the ideal generated by $f$ is the same as the ideal generated by a power series $g\in k^{[[2]]}$, which is a quasi-homogeneous polynomial with respect to a pair of variables of $k^{[[2]]}$.\\ 
By a slight abuse of terminology, we call an irreducible power series $f\in k^{[[2]]}$ of order $>1$ {\it quasi-homogeneous} if the associated plane algebroid curve $R:=\frac{k^{[[2]]}}{(f)}$ is quasi-homogeneous.
%quasi-homogeneity is a property of the plane algebroid curve $R$, and not of $f$. The power series $f$ may not be a quasi-homogeneous polynomial even \\

\begin{remark}
    If $f$ is a quasi-homogeneous polynomial, in the usual sense, then it is quasi-homogeneous according to our definition, but the converse is false. For example, if we consider the irreducible power series $f(X,Y):=Y^3-3X^3Y-X^4-X^5\in k[[X,Y]]$ then, clearly, $f$ is not a quasi-homogeneous polynomial. But $R=k[[x,y]]:=\frac{k[[X,Y]]}{(f)}\cong k[[t^3,t^4+t^5]]$, and using the ideas in the proof of \cite[Theorem 4]{1}, we can show that there exists a power series $\tau\in k[[t]]$ of $t$-order $1$ such that $k[[x,y]]=k[[\tau^3,\tau^4]]$, which implies that $f$ is quasi-homogeneous according to our definition.
\end{remark}

\par If $A$ is a $k$-algebra, then a {\it $k$-linear derivation}, or, a {\it $k$-derivation}, $D$ is defined as a $k$-linear map $D:A\to A$ that satisfies the {\it Leibniz rule}, i.e., $D(ab)=aD(b)+bD(a)$ for all $a,b\in A$. If the underlying field $k$ is understood from the context, then a $k$-linear derivation of $A$ will often simply be called a {\it derivation} of $A$. It is well-known that if $A=k^{[[2]]}$ and $X,Y\in A$ is a pair of variables, then a derivation $D$ of $A$ can be uniquely written as $D=D(X)\frac{\partial}{\partial X}+D(Y)\frac{\partial}{\partial Y}$, where $\frac{\partial}{\partial X},\frac{\partial}{\partial Y}$ are the usual formal partial derivatives with respect to $X,Y$, respectively. If $f\in A$, it is customary to use $f_X$ and $f_Y$ to denote $\frac{\partial f}{\partial X}$ and $\frac{\partial f}{\partial Y}$, respectively, so that $D(f)=f_XD(X)+f_YD(Y)$. If $U,V\in \mathfrak M_2$ is another pair of variables, then applying the usual {\it chain rule} for derivation, we get \[f_U=f_X\frac{\partial X}{\partial U}+f_Y\frac{\partial Y}{\partial U} \ \text{  and  }\  f_V=f_X\frac{\partial X}{\partial V}+f_Y\frac{\partial Y}{\partial V}\ .\]
Since $\frac{\partial X}{\partial U}\cdot\frac{\partial Y}{\partial V}-\frac{\partial Y}{\partial U}\cdot\frac{\partial X}{\partial V}$ is a unit of $k^{[[2]]}$, it follows that $(f_X,f_Y)=(f_U,f_V)$. We record this observation in the following lemma.
\begin{lemma}
\label{partial}
Let $k$ be a field, and $f\in k^{[[2]]}$. If $X,Y$ and $U,V$ are two pairs of variables of $k^{[[2]]}$, then $(f_X,f_Y)=(f_U,f_V)$, i.e., the ideal generated by $f_X$ and $f_Y$ is the same as the ideal generated by $f_U$ and $f_V$.    
\end{lemma}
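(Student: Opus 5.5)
The plan is to prove Lemma \ref{partial} by a direct chain-rule computation, carried out in the order the preceding paragraph suggests. Since $X,Y$ and $U,V$ are both sets of variables of $k^{[[2]]}$, we may regard $X$ and $Y$ as power series in $U,V$, that is, $X=\phi(U,V)$ and $Y=\psi(U,V)$ with $\phi,\psi\in(U,V)$. Symmetrically, $U$ and $V$ are power series in $X,Y$. Writing $f=F(X,Y)$, we have $f=F(\phi(U,V),\psi(U,V))$ as an element of $k[[U,V]]$.

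The first step is to justify the formal chain rule
\[
f_U=f_X\,\frac{\partial X}{\partial U}+f_Y\,\frac{\partial Y}{\partial U},\qquad f_V=f_X\,\frac{\partial X}{\partial V}+f_Y\,\frac{\partial Y}{\partial V}.
\]
Both sides are $k$-derivations of $k[[X,Y]]$ composed with substitution. It therefore suffices to check the identity on the $k$-algebra generators $X$ and $Y$, which is trivial, and then to extend it. The extension uses the fact that a derivation of $k^{[[2]]}$ is $\mathfrak M_2$-adically continuous. This holds because $D(\mathfrak M_2^{i})\subseteq\mathfrak M_2^{i-1}$, so agreement on polynomials passes to power series by taking truncations. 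This gives $(f_U,f_V)\subseteq(f_X,f_Y)$.

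The second step is to show equality, and for this I will use the Jacobian. Let
\[
J=\frac{\partial X}{\partial U}\frac{\partial Y}{\partial V}-\frac{\partial Y}{\partial U}\frac{\partial X}{\partial V}.
\]
The coefficient matrix expressing $(f_U,f_V)$ in terms of $(f_X,f_Y)$ has determinant $J$. Its constant term is the determinant of the linear parts of $\phi,\psi$, which is nonzero because $X,Y$ form a set of variables. Indeed, if the linear parts were dependent, the map $\mathfrak M/\mathfrak M^2$ induced by the automorphism would fail to be surjective. Hence $J$ is a unit of $k^{[[2]]}$.

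Inverting the $2\times 2$ matrix via its adjugate divided by $J$ then expresses $f_X$ and $f_Y$ as $k^{[[2]]}$-linear combinations of $f_U$ and $f_V$. This gives the reverse inclusion. Alternatively, the reverse inclusion follows from the symmetry of the roles of the two pairs of variables.

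The only point needing care is the continuity argument justifying the formal chain rule. Note that the characteristic of $k$ plays no role anywhere in the argument.
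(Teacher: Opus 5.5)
Your proposal is correct and follows the paper's own argument. The chain rule expresses $(f_U,f_V)$ in terms of $(f_X,f_Y)$ through a matrix whose determinant $\frac{\partial X}{\partial U}\cdot\frac{\partial Y}{\partial V}-\frac{\partial Y}{\partial U}\cdot\frac{\partial X}{\partial V}$ is a unit of $k^{[[2]]}$, which gives both inclusions. The only difference is that you also justify the formal chain rule (via $\mathfrak M_2$-adic continuity of derivations) and the unit property of this determinant, both of which the paper takes for granted.
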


Let $f\in k^{[[2]]}$ be an irreducible power series of order $>1$. Since the plane algebroid curve $R:=\frac{k^{[[2]]}}{(f)}$ is a one-dimensional complete local domain containing $k$, it follows from {\it Cohen's structure theorem} for complete local rings that $\overline{R}$, the integral closure of $R$ in its field of fractions, is isomorphic to the power series ring $k[[t]]$.\\ The {\it conductor ideal} of $ R $, denoted by $\mathfrak C_R$, is defined as $\mathfrak C_R:=\text{Ann}_R\big(\frac{\overline{R}}{R}\big)$. It is the largest ideal of $R$ which is also an ideal of $\overline{R}$. Since $R$ is not regular and $\mathfrak C_R$ is an ideal of $\overline R\cong k[[t]]$, it must be of the form $t^ck[[t]]$ for some positive integer $c>1$, which is called the {\it length of the conductor ideal} $\mathfrak C_R$. %Note that if $v(R)$ denotes the value semi-group of $R$, then $c$ is the smallest non-negative integer such that $c+\mathbb N\subseteq v(R)$, where $\mathbb N$ is the set of non-negative integers. 
If we denote the length of an $R$-module $M$ by $l_R(M)$, then $c=l_R\big(\frac{\overline{R}}{\mathfrak C_R}\big)$.\\
There exists a short exact sequence of $R$-modules \[0 \to \frac{R}{\mathfrak C_R} \to \frac{\overline R}{\mathfrak C_R} \to \frac{\overline R}{R} \to 0.\] 
Since length is additive, $c = l_R\big(\frac{R}{\mathfrak C_R}\big) + l_R\big(\frac{\overline R}{R}\big)$. It is well-known that $l_R\big(\frac{\overline R}{\mathfrak C_R}\big)=2l_R\big(\frac{R}{\mathfrak C_R}\big)$ (see, for example, \cite{Kunz}), which implies that $l_R\big(\frac{\overline R}{R}\big) = l_R\big(\frac{R}{\mathfrak C_R}\big) = \frac{c}{2}$. Let $J_f$ be the {\it Jacobian ideal} of $f$, i.e., the ideal of $k^{[[2]]}$ generated by the partial derivatives of $f$ (By {\it lemma \ref{partial}}, $J_f$ does not depend on the choice of a pair of variables.). The {\it Milnor number} of $f$, introduced by J. Milnor, is defined as $\mu(f):=\text{dim}_k\big(\frac{k^{[[2]]}}{J_f}\big)$. Since $k$ is an algebraically closed field of characteristic zero and $f$ is irreducible, it follows from a result of J. J. Risler \cite[Theorem 1]{Ris}, which was earlier proved by H. W. E. Jung in \cite{Jung}, that $\mu(f)=2l_R\big(\frac{\overline R}{R}\big)=c$. Note that if $\sigma$ is a $k$-algebra automorphism of $k^{[[2]]}$, then $R:=\frac{k^{[[2]]}}{(f)}\cong \frac{k^{[[2]]}}{(\sigma(f))}$, which implies that $\mu(f)=\mu(\sigma(f))$.\\ 
Following \cite{1}, by abuse of notation, we use $\Omega_{R/k}$ to denote the {\it module of universally finite K\"ahler differentials} of $R$ over $k$ (see \cite[Chapters 11--13]{Ku0} for the definition and basic properties of the module of universally finite K\"ahler differentials). As in \cite{1}, we denote by $T$ the torsion submodule of $\Omega_{R/k}$. Then \cite[Theorem 1]{1}, together with Risler's result, implies
\[l_R(T)=l_R\bigg(\frac{k^{[[2]]}}{(f,J_f)}\bigg)\le l_R\bigg(\frac{k^{[2]]}}{J_f}\bigg)=\mu(f)=c.\]
Clearly, $l_R(T)=c$ iff $f\in J_f$. On the other hand, it is proved in \cite[Theorem 4]{1} that {\it $l_R(T)=c$ iff $R$ is quasi-homogeneous}. Therefore, we obtain the following result.

\begin{theorem}
    \label{risler}
    Let $k$ be an algebraically closed field of characteristic zero and $f\in k^{[[2]]}$ an irreducible power series of order $\ge 2$. Let $R:=\frac{k^{[[2]]}}{(f)}$, and let $\Omega_{R/k}$ be the module of universally finite K\"ahler differentials of $R$ over $k$. Let $T$ be the torsion submodule of $\Omega_{R/k}$. If $J_f$ is the Jacobian ideal of $f$ and $\mathfrak C_R$ is the conductor ideal of $R$, then the following statements are equivalent.
    \begin{enumerate}[(i)]
        \item $f$ is quasi-homogeneous, i.e., there exist a pair of variables $X,Y\in k^{[[2]]}$ and relatively prime positive integers $m,n$ such that $(f)=(Y^n-X^m)$.
        \item There exist a pair of variables $U,V$ and relatively prime positive integers $m,n$ such that $f=V^n-U^m$.
        \item $f$ is contained in the Jacobian ideal $J_f$.
        \item $l_R(T)=c$, where $c:=l_R\big(\frac{\overline{R}}{\mathfrak C_R}\big)$ and $\overline{ R}$ is the integral closure of $R$ in its field of fractions.
    \end{enumerate}
\end{theorem}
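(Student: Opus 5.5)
We will prove the cycle of implications (ii)$\Rightarrow$(i)$\Rightarrow$(iii)$\Leftrightarrow$(iv)$\Rightarrow$(ii). Most steps are either trivial or come from the facts recalled just before the statement.

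The implication (ii)$\Rightarrow$(i) is immediate: take $X=U$ and $Y=V$.

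For (i)$\Rightarrow$(iii), write $f=u\cdot g$ with $g=Y^n-X^m$ and $u$ a unit of $k^{[[2]]}$. The polynomial $g$ satisfies the Euler identity
\[mn\,g=mY g_Y+nX g_X ,\]
so $g\in J_g$. Since $f_X=u g_X+u_X g$ and $f_Y=u g_Y+u_Y g$, we have $J_f\subseteq J_g+(g)=J_g$. Hence $(f,J_f)\subseteq (g,J_g)=J_g$. On the other hand, $J_f+(f)=J_g+(g)$, because $g=u^{-1}f$ and $u g_X=f_X-u_X u^{-1}f$, and similarly for $Y$. Thus $(f,J_f)=J_g$.

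Next I use the invariance of the Milnor number. Here $\mu(f)=\mu(g)$, since both equal $c$ for the same ring $R$ by Risler's result. This gives
\[l_R\big(k^{[[2]]}/(f,J_f)\big)=\dim_k k^{[[2]]}/J_g=\mu(g)=\mu(f)=\dim_k k^{[[2]]}/J_f .\]
Since $J_f\subseteq (f,J_f)$ and both colengths are finite and equal, $J_f=(f,J_f)$, so $f\in J_f$.

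For (iii)$\Leftrightarrow$(iv), I use the displayed inequality $l_R(T)=\dim_k k^{[[2]]}/(f,J_f)\le \dim_k k^{[[2]]}/J_f=c$. Because $J_f\subseteq (f,J_f)$ with finite colength, equality holds iff $(f,J_f)=J_f$, i.e.\ iff $f\in J_f$.

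For (iv)$\Rightarrow$(ii), Zariski's \cite[Theorem 4]{1} gives that $R$ is quasi-homogeneous, i.e.\ $R\cong k[[t^n,t^m]]$. The substantive point is to upgrade the statement ``the ideal $(f)$ equals $(V^n-U^m)$'' to the equality $f=V^n-U^m$ on the nose. Given $(f)=(Y^n-X^m)$, write $f=u\,(Y^n-X^m)$ with $u$ a unit; we may assume $u(0)=1$ after scaling the variables, since $k$ is algebraically closed. Take an $mn$-th root $w$ of $u$, which exists by Hensel's lemma in characteristic zero, and set $U=w^{n}X$, $V=w^{m}Y$. Then
\[V^n-U^m=w^{mn}(Y^n-X^m)=u(Y^n-X^m)=f.\]
Moreover $U,V$ is a pair of variables, because its linear parts are those of $X,Y$ scaled by $w(0)^{n}$ and $w(0)^m$. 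This proves (ii) and closes the cycle.

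The main obstacle is (i)$\Rightarrow$(iii). Euler's identity only gives $g\in J_g$, and the Jacobian ideal is not obviously invariant under multiplying $f$ by a unit. The resolution planned above is the equality $(f,J_f)=J_g$ combined with the equality of Milnor numbers, which forces $J_f$ to be as large as $(f,J_f)$. If this counting step fails to be rigorous, the fallback is to invoke Saito-type reasoning directly. That is, (i) gives $R$ quasi-homogeneous, hence $l_R(T)=c$ by Zariski, hence $f\in J_f$ by the equivalence (iii)$\Leftrightarrow$(iv) already established.
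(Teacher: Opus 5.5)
Your proof is correct, but for one implication it takes a different route from the paper.

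\textbf{What the paper does.} The paper proves only (i)$\Leftrightarrow$(ii) by hand. It uses the same root-extraction device you use, with $U=\sqrt[m]{u}\,X$ and $V=\sqrt[n]{u}\,Y$ in place of your $mn$-th root; the difference is immaterial. Everything else is quoted:
\begin{enumerate}[(a)]
\item (iii)$\Leftrightarrow$(iv) comes from $l_R(T)=l_R\big(k^{[[2]]}/(f,J_f)\big)\le\mu(f)=c$, exactly as in your argument.
\item (i)$\Leftrightarrow$(iv) comes from \emph{both} directions of Zariski's \cite[Theorem 4]{1}.
\end{enumerate}

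\textbf{What you do differently.} You prove (i)$\Rightarrow$(iii) directly. You combine Euler's identity for $g=Y^n-X^m$ with the equality $(f,J_f)=J_g$. You then use $\mu(f)=\mu(g)=c$, which is Risler's theorem applied to two generators of the same prime ideal. The colength comparison forces $J_f=(f,J_f)$, so $f\in J_f$. This is rigorous. It means you need only the substantive implication $l_R(T)=c\Rightarrow R$ quasi-homogeneous from Zariski's theorem, and Risler's theorem is needed anyway for (iii)$\Leftrightarrow$(iv).

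\textbf{A simpler ordering.} The obstacle you flag, that $J_f$ is not visibly invariant under $f\mapsto uf$, disappears if you reorder your cycle. Your root-extraction step inside (iv)$\Rightarrow$(ii) uses only (i), so prove (i)$\Rightarrow$(ii) first. For $f=V^n-U^m$, Euler's identity reads $mnf=mVf_V+nUf_U$. Lemma \ref{partial} gives $(f_U,f_V)=J_f$, so (ii)$\Rightarrow$(iii) follows with no Milnor-number counting.

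\textbf{A minor point.} The normalization $u(0)=1$ is unnecessary. Since $k$ is algebraically closed of characteristic zero, every unit of $k^{[[2]]}$ already has an $N$-th root for every $N\ge 1$.
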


\begin{proof}
    We only show that (i) and (ii) are equivalent, as the equivalence of the remaining statements follows from the above discussion.\\
    It is obvious that (ii)$\implies$(i). To prove the converse, let $X,Y$ be a pair of variables and $m,n$ relatively prime positive integers such that $(f)=(Y^n-X^m)$. Then there exists a unit $u\in \big(k^{[[2]]}\big)^*$ such that $f=u(Y^n-X^m)$. Since $k$ is algebraically closed, $k^{[[2]]}$ contains an $n$-th root and an $m$-th root of $u$, say $\sqrt[n]{u}$ and $\sqrt[m]{u}$. If we set $U:=\sqrt[m]{u}X$ and $V:=\sqrt[n]{u}Y$, then it is easy to see that $U,V$ is a pair of variables such that $f=V^n-U^m$.
\end{proof}

\begin{remark}
K. Saito proved in \cite[Theorem 4.1]{2} that {\it if $f$ is a convergent power series ring in $n$ variables over $\mathbb C$ such that $\{f=0\}$ defines an isolated singularity at the origin, then $f\in J_f$ iff $f$ becomes a quasi-homogeneous polynomial after an analytic change of co-ordinates}. The arguments given in \cite{2} also work for formal power series rings in $n$ variables over an algebraically closed field $k$ of characteristic zero, except \cite[Lemma 4.2]{2} where analytic methods are used. The above theorem may be treated as an analogue of Saito's result for a formal power series ring in two variables.\\ Here we should mention that the definition of a quasi-homogeneous polynomial, as given in \cite{2}, is different from our definition. However, they are equivalent for irreducible polynomials in two variables. The verification is left as an exercise for the interested reader.
    
\end{remark} 

Now we discuss parametric representations of an irreducible plane algebroid curve. Our treatment closely follows that of \cite{1}.\\
Let $f\in k^{[[2]]}$ be an irreducible power series of order $n>1$. Then, as described in the {\it introduction}, $R:=\frac{k^{[[2]]}}{(f)}$ has a parametric representation, called a {\it Puiseux parametric representation}, of the form
\[x=t^n,\ y=t^m+\sum_{i>m}c_it^i,\]
where $m$ is not divisible by $n$ and $c_i\in k$ for all $i$. 
%In other words, $R=k[[x,y]]\cong k[[t^n,t^m+\sum_{i>m}c_it^i]]\subseteq k[[t]]\cong \overline R$. 
Since $t$ is contained in the field of fractions of $R$, if $J:=\{m\}\cup \{i>m\ |\ c_i\neq 0\}$ then $\text{g.c.d.}(\{n\}\cup J)=1$.\\
%Recall that $v(R)$, value semi-group of $R$, is defined as \[v(R):=\{v(r)\ |\ r\in R\setminus\{0\}\}=\{\text{ord}_t(r)\ |\ r\in r\setminus \{0\}\},\] where $v$ is the valuation of $\overline{R}$. Clearly, $v(R)$, which is a submonoid of $(\mathbb N,+)$, contains the submonoid generated by $m$ and $n$, which we denote by $\mathbb N_{m,n}$. 
If $\mathfrak C_R$ is the conductor ideal of $R$ and $c:=l_R\big(\frac{ \overline{R}}{\mathfrak C_R}\big)$, then $c$ is the smallest non-negative integer such that $c+\mathbb N:=\{c+i\ |\ i\in \mathbb N\}\subseteq v(R)$, the value semi-groups of $R$. With this set-up, the following results are proved by O. Zariski in \cite[Theorem 4 and the subsequent discussion]{1}.

\begin{theorem}
    Let $k$ be an algebraically closed field of characteristic zero and $f\in k^{[[2]]}$ an irreducible power series of order $n>1$. Then $R:=\frac{k^{[[2]]}}{(f)}$ has a parametric representation of the form 
    \[x=t^n,\ y=t^m+\sum_{i=1}^{c/2}b_it^{e_i}\ ,\]
    called a {\it short (parametric) representation}, where $m>n$ is a positive integer, which is not a multiple of $n$, $b_i\in k$ and $m<e_1<e_2<\ldots <e_{c/2}$ are positive integers such that $e_i \notin v(R)$, the value-semigroup of $R$, for all $i=1,\ldots,c/2$. In particular, $e_1.\dots,e_{\frac{c}{2}}\in \{m+1,\dots,c-1\}$. %Given such a short representation of $f$, 
    Also, the following assertions hold.
    \begin{enumerate}[(i)]
    \label{Z0}
        \item The positive integers $n,m$ are invariants of $R$.
        \item $f$ is quasi-homogeneous iff $y=t^m$, i.e., iff $b_i=0$ for all $i=1,\dots,\frac{c}{2}$, for some short representation of $R$.%The power series $f$ is quasi-homogeneous iff $b_i=0$ for all $i$.
        \item $f$ is not quasi-homogeneous iff $R$ has a short representation of the form 
        \[x=t^n,\ y=t^m+bt^\lambda+\sum_{i=2}^{\frac{c}{2}}b_it^{e_i},\]
        where $b:=b_1$ is nonzero and $\lambda:=e_1$ is such that $\lambda+n$ is not contained in $v(R)$. In this case, $\lambda$ is an analytic invariant of $R$.
        \item If $d\in v(R)\setminus \mathbb N_{m,n}$, where $\mathbb N_{m,n}:=\{am+bn\ |\ a,b\in \mathbb N\}$, then% is the set of all non-negative linear combinations of $m,n$, then
        \[d\ge \frac{mn}{\text{g.c.d.}(m,n)}+\lambda-m.\]
        In particular, since $n$ does not divide $m$, we have $d>2m$.
    \end{enumerate}
\end{theorem}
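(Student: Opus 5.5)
The statement collects results of Zariski from \cite[Theorem 4]{1}, so the plan is to reconstruct Zariski's elimination argument rather than invent a new one. The basic tool is a reduction process on the Puiseux parametric representation $x=t^n$, $y=t^m+\sum_{i>m}c_it^i$. Two kinds of moves change $y$ without changing $R$ or the form $x=t^n$.

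\begin{enumerate}[(a)]
\item Replace $y$ by $y-g(x,y)$ for a power series $g$ whose terms have $t$-order greater than $m$.
\item Reparametrize $\overline R$ by $t\mapsto \tau=t(1+\cdots)$, then replace $x$ by $\tau^n$ (taking an $n$-th root of a unit) and renormalize.
\end{enumerate}

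\textbf{Step 1: obtaining a short representation.} Suppose some exponent $i>m$ with $c_i\neq 0$ lies in $v(R)$. Choose $r\in R$ with $v(r)=i$ and leading coefficient $1$; we may take $r\in (x,y)^2+(x)$. Then $y-c_ir$ has no $t^i$ term and still has leading term $t^m$. Iterating this, with $t$-adic convergence since $R$ is complete, removes every exponent lying in $v(R)$. Every $i\ge c$ lies in $v(R)$, so only exponents in $\{m+1,\dots,c-1\}\setminus v(R)$ survive. There are exactly $c/2$ gaps by the length computation in the Preliminaries ($l_R(\overline R/R)=c/2$), and all of them exceed $m$ except those below $m$, which never occur in $y$. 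This gives the short representation. Invariance of $n$ and $m$ (part (i)) follows because $n$ is the multiplicity, i.e.\ the smallest positive element of $v(R)$, and $m$ is the smallest element of $v(R)$ not divisible by $n$.

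\textbf{Step 2: the key computation for (ii) and (iii).} Let $\lambda=e_1$ be the first surviving exponent, with $b\ne0$, and suppose $\lambda+n\in v(R)$. Substitute $t=\tau(1+a\tau^{\lambda-m})$ with $a$ chosen to kill the $\tau^\lambda$ term of $y$. Then $x=\tau^n(1+na\tau^{\lambda-m}+\cdots)$, and after replacing $\tau$ by an $n$-th root to restore $x=\tau^n$, the coefficient of $\tau^\lambda$ in $y$ changes by a nonzero multiple of $a$, namely $(m-n)a$ up to normalization, which is nonzero since $m\neq n$. Choose $a$ so that it vanishes. The new error terms have exponents of the form $\lambda+(\text{positive multiples of } \lambda-m)$, and the correction appearing at order $\lambda+n$ lies in $v(R)$, so it is removable by Step 1. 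Thus the leading non-$t^m$ exponent strictly increases. Because everything beyond $c$ is absorbed, the process terminates. Either all $b_i$ vanish, giving $y=t^m$ and $R=k[[t^n,t^m]]$ quasi-homogeneous, or we stop at some $\lambda$ with $\lambda+n\notin v(R)$.

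For the converse in (ii), suppose $R$ is quasi-homogeneous and some short representation still had $b_1\ne0$ with $\lambda+n\notin v(R)$. Then $t\frac{d}{dt}$ would be a derivation up to coordinates, which I would contradict directly. A cleaner route is to compare with Theorem \ref{risler}: for a representation with $\lambda+n\notin v(R)$, exhibit the element $mx\,dy-ny\,dx$, which has value $\lambda+n$, as a torsion-free obstruction forcing $l_R(T)<c$. Invariance of $\lambda$ then follows, because $\lambda+n$ is characterized as the smallest $t$-order of $m\,y\,\frac{dx}{dt}\cdot t-n\,x\,\frac{dy}{dt}\cdot t$-type elements not in $v(R)$. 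More precisely, it is $\min\{v(\omega)\}-$ over differentials $\omega$ in $\Omega_{R/k}$ whose value is not in $v(R)+n$.

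\textbf{Step 3: part (iv).} Elements of $R$ are series $\sum a_{ij}x^iy^j$. Values outside $\mathbb N_{m,n}$ arise only when the leading terms of several monomials cancel. The first cancellation is between $y^{n'}$ and $x^{m'}$, where $n'=n/g$ and $m'=m/g$ with $g=\gcd(m,n)$, both of value $mn/g$. The difference $y^{n'}-x^{m'}$ has value at least $mn/g-m+\lambda$, since the $b t^\lambda$ term contributes $n'bt^{m(n'-1)+\lambda}$. A Newton-polygon style induction on leading forms shows that every non-$\mathbb N_{m,n}$ value is at least this. Finally $mn/g\ge 2m$ because $n'\ge2$, and $\lambda>m$, so $d>2m$.

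The main obstacle is Step 2: controlling the terms generated at order $\lambda+n$ and beyond during the reparametrization, and proving the invariance of $\lambda$ rigorously.
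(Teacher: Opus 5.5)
The paper gives no proof of this statement; it quotes Zariski's Theorem 4 and the discussion after it. So your proposal has to stand on its own as a reconstruction. Step 1, the argument for (i), and the weighted-leading-form idea in Step 3 are sound. (In Step 1, the surviving exponents are the gaps in $(m,c)$. These are fewer than $c/2$, because $1,\dots,n-1$ are gaps, so the indexing in the statement has to be read loosely.)

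\textbf{Step 2 does not work.} This step is the heart of (iii).
\begin{itemize}
\item If you substitute $t=\tau(1+a\tau^{\lambda-m})$ and then take $\tau'=x^{1/n}$ to restore $x=\tau'^n$, you get $\tau'=t$ back. The only reparametrizations keeping $x=t^n$ are $t\mapsto\zeta t$ with $\zeta^n=1$. So the coefficient of $t^\lambda$ does not change by $(m-n)a$; it does not change at all.
\item Your procedure never really uses $\lambda+n\in v(R)$. If it worked, it would remove $t^\lambda$ unconditionally and make every branch quasi-homogeneous. That contradicts, for example, $x=t^3$, $y=t^7+t^8$, where $\lambda+n=11\notin v(R)$.
\end{itemize}
What is needed is a change of the generator $x$. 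Put $x_1=x+\phi$ with $\phi\in R$ of value $\lambda-m+n$ and leading coefficient $\beta$, and set $\tau=x_1^{1/n}$. Then $t=\tau(1-\frac{\beta}{n}\tau^{\lambda-m}+\cdots)$, and the coefficient of $\tau^\lambda$ in $y$ becomes $b-\frac{m\beta}{n}$, which vanishes for $\beta=\frac{nb}{m}$.

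The existence of $\phi$ is exactly where the hypothesis enters, and it needs the bound of (iv) first. Your Step 3 argument gives that bound for the first exponent of any representation $x=t^n$, $y=t^m+bt^\lambda+\cdots$ with $b\neq0$.
\begin{itemize}
\item Since $\frac{mn}{\text{g.c.d.}(m,n)}\ge 2m>m+n$, you get $\lambda+n<\frac{mn}{\text{g.c.d.}(m,n)}+\lambda-m$.
\item Hence $\lambda+n\in v(R)$ forces $\lambda+n\in\mathbb N_{m,n}$.
\item Because $\lambda\notin v(R)$, this means $\lambda+n=(j+1)m$ with $j\ge1$, and $\phi=\frac{nb}{m}y^j$ works.
\end{itemize}
This is Zariski's substitution $t=\tau-\frac{a}{n}t^{jm-n+1}+\cdots$. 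So (iv) must be proved before (iii) and applied at every stage of the iteration.

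\textbf{The other direction of (iii) and the invariance of $\lambda$ are only gestured at.} You need to show that a short representation with $b\ne0$ and $\lambda+n\notin v(R)$ is not quasi-homogeneous.
\begin{itemize}
\item The differential you write, $mx\,dy-ny\,dx$, has value $m+n$. The one of value $\lambda+n$ is $nx\,dy-my\,dx$, whose expansion starts with $nb(\lambda-m)t^{\lambda+n-1}\,dt$.
\item Getting from this to $l_R(T)<c$ needs Zariski's link between $l_R(T)$ and values of differentials, which you do not supply.
\item Theorem \ref{risler} of the paper is itself derived from the statement you are proving, so you cannot invoke it here.
\end{itemize}

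A self-contained fix goes through the value set of differentials.
\begin{itemize}
\item Let $\Lambda$ be the set of values $v(g\,dt)=\text{ord}_t(g)+1$ of the differentials $A\,dx+B\,dy$ ($A,B\in R$) in $\overline R\,dt$. This set is an analytic invariant.
\item For $k[[t^n,t^m]]$, each $x^iy^j\,dx$ and $x^iy^j\,dy$ is a single monomial with value in $\mathbb N_{m,n}$, so $\Lambda\subseteq v(R)$.
\item In general, run your Step 3 leading-form analysis on $A\,dx+B\,dy$. A cancellation at the lowest weighted level can only happen at level at least $m+n$, and all correction terms then have value at least $\lambda+n$.
\item Hence every element of $\Lambda\setminus v(R)$ is at least $\lambda+n$. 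Together with $nx\,dy-my\,dx$, this gives $\lambda+n=\min(\Lambda\setminus v(R))$.
\end{itemize}
That single equality shows both that $R$ is not quasi-homogeneous and that $\lambda$ is an analytic invariant.
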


\begin{remark}
    Here we should mention that there is a typo in the proof of Theorem 4 of \cite{1}. On page 784, it should be `$t=\tau-\frac{a}{n}t^{jm-n+1}+\dots$', and not `$t=\tau-\frac{a}{n}t^{jm+n+1}+\dots\ $'.
\end{remark}

Next we state some results for the value semi-group  of $R:=\frac{k^{[[2]]}}{(f)}$, where $f\in k^{[[2]]}$ is an irreducible power series of order $n\ge 2$. Let $X,Y$ be a pair of variables of $k^{[[2]]}$. Then $f$ can be written as an infinite formal sum $f=\sum_{i=n}^\infty f_i$, where each $f_i$ is zero or a homogeneous polynomial of degree $i$ in $X,Y$, and $f_n\neq 0$. With $k$ being algebraically closed, we can write $f_n$ as a product of $n$ linear homogeneous polynomials. Since $f$ is irreducible, it follows from \cite[exercise 5.14(b) of Chapter 1]{Hart} that $f_n$ cannot have two or more distinct prime factors. Therefore, after a suitable change of variables, we may assume that $f_n=Y^n$. Then by the {\it Weierstrass preparation theorem}, there exist a polynomial $g\in (k[[X]])[Y]$ of the form $g=Y^n+a_1Y^{n-1}+\dots+a_{n-1}Y+a_n$, where $a_i\in Xk[[X]]$ for all $i$, and a unit $u\in (k[[X,Y]])^*$ such that $f=ug$. Since $f=ug$, both $f$ and $g$ have the same order. Therefore, the order of $g$ is also $n$ and $f_n=u(0,0)\cdot g_n$, which implies that $g_n=Y^n$ and $u(0,0)=1$. In addition, it is clear that $a_i\in X^{i+1}k[[X]]$ for all $i=1,\dots,n$. Since $(f)=(g)$, $R=\frac{k[[X,Y]]}{(g)}$; therefore, replacing $f$ with $g$, we can assume at the outset that $f\in k[[X]][Y]$ is a monic polynomial in $Y$ of degree $n$, and the coefficient of $Y^i$ has an $X$-order $\ge n-i+1$ for all $0\le  i<n$. With this set-up, we state the following results, due to S. S. Abhyankar and T. T. Moh, which can be found in \cite[Chapters 1--3]{A} and \cite[Corollary 7.2]{A-M}.

\begin{theorem}
    \label{AM}
    Let $k$ be an algebraically closed field of characteristic zero, and let $f\in k[[X]][Y]$ be a monic polynomial of degree $n\ge 2$ in $Y$, which is irreducible as an element of $k[[X,Y]]$. Let $R:=\frac{k[[X,Y]]}{(f)}$, and let $\overline R\cong k[[t]]$ be the integral closure of $R$. Suppose $R$ has a parametric representation $x(t):=t^n$ and $y(t):=t^m+\textit{higher-degree terms}$, i.e., $f\big(t^n,y(t)\big)=0$, where $m>n$ is not a multiple of $n$. Let \[J:=\{ i \in \mathbb{N} \ |\ \text{the coefficient of $t^i$ in $y(t)$ is nonzero}\}.\] Since $k[[t]]$ is the integral closure of $k[[t^n,y(t)]]$ in its field of fractions, it is clear that $\text{g.c.d.}(\{n\}\cup J)=1$. We define two sequences of positive integers $(m_i)$ and $(d_i)$ as follows :\\
    Let $m_0:=n, m_1:= \text{min }J=m, d_1=n$; and $d_i:= \text{g.c.d.}(d_{i-1}, m_{i-1}), m_i:=\text{min }\{ j \in J \ |\ d_i \text{ does not divide }j\}$, for all $i\ge 2$. Then there exists a unique non-negative integer $h$ such that $d_{h+1}=1$, and we define a finite sequence of positive integers $r_0, \dots , r_h$, as $r_0:= m_0 =n$, and \[r_i = \frac{m_1d_1+\sum_{l=2}^i (m_l -m_{l-1})d_l}{d_i}\ ,\hspace{5mm}  \text{for $1 \leq i \leq h$.}\] 
    Then $r_0 < r_1 < \dots < r_h$, and $r_0, \dots , r_h$ generate $v(R)$, the value semi-group of $R$.
\end{theorem}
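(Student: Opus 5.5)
The plan is to follow the classical route of Abhyankar--Moh through conjugate Puiseux roots and key polynomials. Put $n_i:=d_i/d_{i+1}$.

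\emph{Step 1: the arithmetic of the sequences.} Each $m_i$ is chosen so that $d_i\nmid m_i$, and $d_{i+1}=\text{g.c.d.}(d_i,m_i)$. Hence $d_{i+1}$ is a proper divisor of $d_i$, and the sequence $(d_i)$ strictly decreases. Since $\text{g.c.d.}(\{n\}\cup J)=1$, some $d_{h+1}$ equals $1$, and it is unique because the sequence is strictly decreasing.

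From the defining formula we get the recursion
\[
r_{i+1}d_{i+1}=r_id_i+(m_{i+1}-m_i)d_{i+1}, \quad\text{that is,}\quad r_{i+1}=n_ir_i+m_{i+1}-m_i .
\]
Since $m_{i+1}>m_i$ and $n_i\ge 2$, this gives $r_{i+1}>n_ir_i\ge r_i$, so $r_0<r_1<\dots<r_h$. By induction one also gets $\text{g.c.d.}(r_0,\dots,r_i)=d_{i+1}$, and that $n_ir_i$ lies in the semigroup generated by $r_0,\dots,r_{i-1}$.

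\emph{Step 2: key polynomials and their values.} Write $\omega$ for a primitive $n$-th root of unity. Then $f(t^n,Y)=\prod_{j=0}^{n-1}\bigl(Y-y(\omega^jt)\bigr)$, and for every $g\in k[[X]][Y]$ we have $v\bigl(g(x,y)\bigr)=\text{ord}_t\,g(t^n,y(t))$.

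For $1\le i\le h$, let $y_i(t)$ be the truncation of $y(t)$ strictly below $t^{m_i}$. Every exponent of $y_i$ is divisible by $d_i$, so its distinct conjugates over $k((X))$ are the $n/d_i$ series $y_i(\omega^jt)$. Let $g_i$ be the minimal polynomial of $y_i$; it is monic of degree $n/d_i$ in $Y$. Also set $g_0:=X$.

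Then $v(g_i)=\sum_j\text{ord}_t\bigl(y(t)-y_i(\omega^jt)\bigr)$, and each term is determined by the contact:
\begin{itemize}
\item $\text{ord}_t\bigl(y(t)-y_i(\omega^jt)\bigr)=m_l$ when $\omega^j$ fixes the exponents up to $m_{l-1}$ but not $m_l$ (for $l\le i$);
\item $\text{ord}_t\bigl(y(t)-y_i(\omega^jt)\bigr)=m_i$ for the $n/d_i$ conjugates that agree with $y_i$.
\end{itemize}
Counting the conjugates in each class gives the telescoping sum $v(g_i)=r_i$. This shows $\langle r_0,\dots,r_h\rangle\subseteq v(R)$.

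\emph{Step 3: the reverse inclusion.} Every element of $R$ is the image of a polynomial in $Y$ of degree $<n$ over $k[[X]]$. Since $\deg_Y g_i=n_1\cdots n_{i-1}$, repeated division with remainder expands such a polynomial uniquely as a sum of monomials
\[
c\,X^{a_0}g_1^{a_1}\cdots g_h^{a_h},\qquad 0\le a_i<n_i\ \text{for}\ i\ge 1 .
\]
Such a monomial has value $a_0r_0+\sum_i a_ir_i$. I claim that distinct exponent vectors give distinct values. Suppose two of them coincide; look at the largest index $i$ where the exponents differ. Reducing modulo $d_i$ and using $\text{g.c.d.}(r_0,\dots,r_{i-1})=d_i$ forces $d_i\mid(a_i-a_i')r_i$. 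But $\text{g.c.d.}(r_i,d_i)=d_{i+1}$, so $n_i\mid a_i-a_i'$, contradicting $|a_i-a_i'|<n_i$ unless $a_i=a_i'$. This is the standard uniqueness argument. So in any expansion the minimal value is attained by exactly one monomial, no cancellation occurs, and the value of the element lies in $\langle r_0,\dots,r_h\rangle$.

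I expect the main obstacle to be Step 2: the bookkeeping of how many conjugates $\omega^j$ have contact exactly $m_l$ with $y$, and checking that the sum telescopes to $r_i$. An equivalent route I would keep in reserve is to take the $g_i$ to be Abhyankar's approximate roots of $f$ instead. In either version one must also verify that the division with remainder in Step 3 stays inside $k[[X]][Y]$ and respects the degree bounds, which holds because every $g_i$ is monic.
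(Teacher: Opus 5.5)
The paper does not prove this statement. It quotes it from Abhyankar's lectures on expansion techniques and from Abhyankar--Moh (Corollary 7.2), where the generators come out as intersection multiplicities of $f$ with its approximate roots.

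Your proposal is a correct self-contained proof of the same classical shape. The real difference is the choice of auxiliary polynomials. You take $g_i$ to be the minimal polynomial of the truncation $y_i$, while the cited route uses the approximate roots of $f$ of degree $n/d_i$. Approximate roots are built from $f$ alone by Tschirnhausen-type operations, independent of the parametrization, which is what Abhyankar--Moh need elsewhere. Your $g_i$ depend on the Puiseux series, but they make $v(g_i)=r_i$ a direct count of contacts.

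The second half is common to both routes and you have it right: the expansion in monomials $X^{a_0}g_1^{a_1}\cdots g_h^{a_h}$ with $0\le a_i<n_i$, and the distinctness of their values via $\gcd(r_0,\dots,r_{i-1})=d_i$ and $\gcd(r_i,d_i)=d_{i+1}$.

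The one count you actually state in Step 2 is wrong and needs fixing.
\begin{itemize}
\item Among the $n/d_i$ distinct conjugates of $y_i$, exactly one, namely $y_i$ itself, has contact $m_i$ with $y$, not $n/d_i$ of them.
\item For $1\le l\le i-1$, exactly $(d_l-d_{l+1})/d_i$ of them have contact $m_l$. The reason is that $\omega^{je}=1$ for all $e\in J$ with $e<m_l$ iff $(n/d_l)\mid j$, using $\gcd(n,m_1,\dots,m_{l-1})=d_l$. This gives $d_l-d_{l+1}$ residues $j$ mod $n$, and each conjugate accounts for $d_i$ of them.
\end{itemize}
With the correct count,
\[
v(g_i)=\sum_{l=1}^{i-1}\frac{d_l-d_{l+1}}{d_i}\,m_l+m_i=\frac{1}{d_i}\Bigl(m_1d_1+\sum_{l=2}^{i}(m_l-m_{l-1})d_l\Bigr)=r_i .
\]
You can check this on $x=t^4$, $y=t^6+t^7$: there $g_2=Y^2-X^3$ has contacts $6$ and $7$, and $v(g_2)=13=r_2$.

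The same fact $\gcd(n,m_1,\dots,m_{i-1})=d_i$ is what guarantees that $y_i$ has exactly $n/d_i$ conjugates, not merely at most $n/d_i$. You need this for $\deg_Y g_i=n_1\cdots n_{i-1}$, and hence for the mixed-radix expansion in Step 3.

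Finally, the claim in Step 1 that $n_ir_i\in\langle r_0,\dots,r_{i-1}\rangle$ does not follow by a simple induction; it needs a conductor estimate. Nothing in your argument uses it, so just drop it.
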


\begin{remark}
\label{rem}
    Let $f\in k[[X,Y]]$ be an irreducible power series of order $n\ge 2$ and $R=k[[x,y]]:=\frac{k[[X,Y]]}{(f)}$ be the plane algebroid curve defined by $f$. Let $x=t^n,y=t^m+\textit{higher-degree terms}$ be a parametric representation of $R$. Let $v(R)$ be the value semi-group of $R$, and let $c$ be the length of the conductor ideal of $R$. With this set-up, we make a few elementary observations.
    \begin{enumerate}[(a)]
        \item Since $v(R)$ contains all positive integers starting from $c$, $\text{g.c.d.}(v(R)\setminus\{0\})=1$. Therefore, if $v(R)=\mathbb N_{m,n}$, then $m$ and $n$ must be relatively prime. Conversely, if $\text{g.c.d.}(m,n)=1$, then, with the notation of {\it theorem \ref{AM}}, we have $d_2=1$, which implies that $v(R)$ is generated by $r_0:=n$ and $r_1:=m$. Therefore, it follows from {\it theorem \ref{AM}} that $v(R)$ is generated by $m$ and $n$ iff they are relatively prime.
        \item Suppose $m,n$ are relatively prime. Then the largest positive integer that cannot be written as a non-negative linear combination of $m$ and $n$ is $mn-m-n$. Since $v(R)=\mathbb N_{m,n}$ (by (a)) and $c$ is the smallest positive integer such that $c+\mathbb N\subseteq v(R)$, it follows that if $m$ and $n$ are relatively prime, then $c=mn-m-n+1$.
        \item Since $R$ does not contain any element of $t$-order $c-1$, if $a(t)\in \overline{R}=k[[t]]$ is an element of order $c-n$, then $a(t)\frac{d}{dt}(t^n)\notin R$. Therefore, if the $t$-order of $a(t)$ is $c-n$, then $a(t)\frac{d}{dt}$ does not induce a derivation of $R$.
        \item If $n=2$ and $m\ge 3$ is an odd positive integer, then $c=2m-m-2+1=m-1$. Consequently, it follows from {\it theorem \ref{Z0}\textcolor{blue}{(iii)}} that $f$ is quasi-homogeneous; in fact, there exists a pair of variables $U,V\in k[[X,Y]]$ such that $f=V^2-U^m$.
        \item If $f$ is quasi-homogeneous, then $f$ has a parametric representation of the form $x=t^n$ and $y=t^m$. Since $f$ is irreducible, $\text{g.c.d.}(m,n)=1$; and therefore, it follows from (a) that $v(R)=\mathbb N_{m,n}$. However, the converse is false, i.e., $f$ may not be quasi-homogeneous even if $m,n$ are relatively prime. We can use {\it theorem \ref{Z0}\textcolor{blue}{(iii)}} to construct a plethora of such examples. For example, if $n<m$ are relatively prime positive integers, then for every positive integer $\lambda>m$ that satisfies $\lambda+n\notin v(R)$, a plane algebroid curve with a parametric representation $x=t^n,y=t^m+bt^\lambda$ is not quasi-homogeneous for all nonzero $b\in k$. In particular, since $n,m$ are invariants of $R$, \[\{x=t^3,y=t^p+t^{2p-6}\}_{p\ge 7\text{ is not divisible by }3}\] gives an infinite family of pair-wise non-isomorphic plane algebroid curves which are not quasi-homogeneous.
    \end{enumerate}
\end{remark}

\section{Main result}
\label{sec:3}
	
\iffalse	

\fi

Now we prove our main result. 

\begin{theorem}
\label{main}
Let $f \in k[[X, Y]]$ be an irreducible power series of order $n\ge 2$, which is not quasi-homogeneous. Let $R=k[[x,y]]:=\frac{k[[X,Y]]}{(f)}$, and let $D$ be a nonzero $k$-derivation of $R$. Let $ k[[t]] $ be an integral closure of $k[[x, y]]$.
%which has a short presentation $k[[t^n,t^m+c_1t^{m_1}+\dots+c_st^{m_s}]]\subseteq k[[t]]$ as in Lemma \ref{l}. 
Suppose $D$ is induced by a $k$-derivation $a(t)\frac{d}{dt}:k[[t]]\to k[[t]]$ for some $a(t)\in k[[t]]$. If $r:=\text{ord}_t(a)$, then $r\ge n+1$.
%, then the $t$-order of $a(t)$ is at least $n+1$.%, then $r\ge n$.
    
\end{theorem}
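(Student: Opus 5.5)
We plan to work in a short representation from Theorem \ref{Z0}(iii): since $f$ is not quasi-homogeneous, we may take $x=t^n$, $y=t^m+bt^\lambda+\sum_{i\ge 2}b_it^{e_i}$ with $b\neq 0$, $\lambda\notin v(R)$ and $\lambda+n\notin v(R)$. Write $a(t)=\sum_{i\ge r}a_it^i$ with $a_r\ne 0$, and suppose, for a contradiction, that $r\le n$. The derivation condition says that $g:=D(x)=n\,t^{n-1}a(t)$ and $D(y)=a(t)\,y'(t)$ both lie in $R$. The key reformulation is
\[
D(y)=g\cdot\frac{y'(t)}{x'(t)}=g\cdot\frac{dy}{dx},
\]
so we must show that $g\cdot\frac{dy}{dx}\notin R$ for every $g\in R$ with $\text{ord}_t(g)=r+n-1\le 2n-1$.

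\textbf{Step 1 (possible values of $r$).} The elements of $v(R)$ below $2n$ are $0$, $n$, and possibly $m$ (only if $m<2n$). Hence $r+n-1\in\{n,m\}$, that is, $r=1$ or $r=m-n+1$ (the latter only when $m\le 2n-1$). Expanding $g$ as a power series in $x,y$, we get
\[
g=\alpha x+\beta y+h,
\]
with $\alpha,\beta\in k$ not both zero and $h\in(x,y)^2R$. The leading term of $g$ determines which coefficient is nonzero: $\alpha\neq0$ if $r=1$, and $\alpha=0$, $\beta\ne 0$ if $r=m-n+1$.

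\textbf{Step 2 (the main computation).} Compute
\[
x\frac{dy}{dx}=\frac{1}{n}\Big(mt^m+\lambda bt^\lambda+\sum_{i\ge2}e_ib_it^{e_i}\Big),
\qquad
x\frac{dy}{dx}-\frac{m}{n}\,y=\frac{b(\lambda-m)}{n}\,t^\lambda+\cdots .
\]
This has order exactly $\lambda\notin v(R)$, so it is not in $R$; this is essentially Zariski's observation. Similarly,
\[
y\frac{dy}{dx}=\frac{1}{n}t^{m-n}\big(mt^{2m}+(m+\lambda)bt^{m+\lambda}+\cdots\big).
\]
The next-to-leading term of $y\frac{dy}{dx}$ sits at order $m+\lambda-n$. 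For the monomials $x^iy^j$ in $h$ we get analogous expansions, each with a first ``non-monomial'' term shifted by $\lambda-m$ relative to the leading one.

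The plan is to show that, after subtracting from $g\frac{dy}{dx}$ the obvious element $\frac{m}{n}\alpha y+\frac{m}{2n}\cdot\frac{d(y^2)}{d(\cdot)}$-type corrections (concretely: the element of $R$ obtained by replacing every $t^{m}$-leading piece by the corresponding monomial in $x,y$), the remainder has a nonzero term in an order lying in $\mathbb N\setminus v(R)$. In the case $r=1$, this order is $\lambda$. In the case $r=m-n+1$, the candidate order is $\lambda+m-n$; here $m<2n$ should help relate this to the gap $\lambda+n$ via the semigroup structure in Theorem \ref{AM}.

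\textbf{Main obstacle.} The hard step is controlling the contributions of $\beta y\frac{dy}{dx}$ and $h\frac{dy}{dx}$ at the critical order. These terms have orders $2m-n$ and at least $m+n$, which may well be smaller than $\lambda$. So one cannot argue merely by comparing leading orders: one must show that they are, up to order $\lambda$ (resp. $\lambda+m-n$), equal to elements of $R$ built from $x,y$, and that they contribute no cancelling $t^\lambda$ term. For this I would use that every $t$-exponent in $y$ below $\lambda$ is $m$ itself, so that up to order $\lambda$ the curve looks like $(t^n,t^m)$, on which $t\frac{d}{dt}$ acts by weights. Terms built only from $t^n,t^m$ are then mapped by $g\frac{dy}{dx}$ into $k[[t^n,t^m]]$-combinations, which are realized in $R$ up to the error introduced by $bt^\lambda$. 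Theorem \ref{Z0}(iv) (elements of $v(R)\setminus\mathbb N_{m,n}$ are at least $\frac{mn}{\gcd(m,n)}+\lambda-m$) is what I would invoke to ensure that no element of $R$ outside the $x,y$-monomial span can cancel the offending term.
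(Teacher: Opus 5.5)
Your Step 1 is correct and is a tidy reduction. Only $r=1$, or $r=m-n+1$ with $n<m<2n$, survive, and this also absorbs the paper's separate treatment of $r=n$. The gap is Step 2, which is where the entire proof lives and which you give only as a plan.

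The obstacle you single out, namely the low-order terms $\beta y\frac{dy}{dx}$ and $h\frac{dy}{dx}$, is handled uniformly by a first-order lemma you never formulate. Write $y=y_0+\delta$ with $y_0=t^m$ and $\delta=bt^\lambda+\cdots$. Taylor expansion gives $\Phi(x,y)=\Phi(x,y_0)+\Phi_Y(0,0)\,\delta+O(t^{\lambda+n})$ for every $\Phi\in k[[X,Y]]$. Since $n\nmid m$, $\Phi_Y(0,0)$ equals the coefficient $[z]_m$ of $t^m$ in $z=\Phi(x,y)$. Hence every $z\in R$ satisfies $[z]_j=[z]_m[\delta]_j$ for all $j<\lambda+n$ with $j\notin\mathbb N_{m,n}$. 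Theorem \ref{Z0}(iv) plays no role here, since every element of $R$ is already a power series in $x,y$.

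Even with this lemma, comparing $t^\lambda$-coefficients of $D(y)=a\,y'$ when $r=1$ only gives $m a_{\lambda-m+1}+\lambda b a_1=m b a_1$, which is not contradictory. The missing idea is to test $D(x)=nt^{n-1}a\in R$ at order $\lambda-m+n$. This exponent is below $\lambda$ and lies outside $\mathbb N_{m,n}$ precisely because $\lambda+n\notin v(R)$. So $a_{\lambda-m+1}=0$, and then $(\lambda-m)ba_1=0$, a contradiction. The hypothesis $\lambda+n\notin v(R)$ never appears in your plan, and without it the argument cannot close.

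In the second case you also skip a necessary reduction. $\text{ord}_t D(y)=2m-n$ lies in $v(R)$ and is less than $2m$, so by Theorem \ref{Z0}(iv) it lies in $\mathbb N_{m,n}$, which forces $2m-n=2n$, i.e. $2m=3n$. Without this, your candidate order $\lambda+m-n$ need not be a gap: for $x=t^5$, $y=t^7+t^8$ one gets $\lambda+m-n=10\in v(R)$.

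Once $2m=3n$, the lemma applied to $D(x)$ at order $\lambda$ gives $a_{\lambda-n+1}=ba_r$. Applied to $D(y)$ at order $\lambda+m-n$, it forces $(m+\lambda)ba_r=0$. This order is $<\lambda+n$ and is not a multiple of $\text{g.c.d.}(m,n)$ because $\lambda$ is not, and $[D(y)]_m=0$.

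Completed this way, your route is genuinely different from the paper's and more elementary. The paper instead:
\begin{enumerate}
\item disposes of $r=1$ by showing $f\in J_f$ and invoking the Zariski--Risler characterization (Theorem \ref{risler});
\item handles $1<r<n$ by forcing $(n,m)=(2(r-1),3(r-1))$ and proving $\lambda+2n\notin v(R)$, so that $D(y^2-x^3)$ gives the contradiction;
\item treats $r=n$ separately.
\end{enumerate}
As submitted, however, the decisive computations are absent, so the proposal is not yet a proof.
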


\begin{proof}

First we show that the $t$-order of $a(t)$ cannot be smaller than $n$.\\ 
If possible, let $r<n$. We consider two cases, where $r=1$ and where $r>1$.\\
%If $m > 2n$, then $r =1$ is the only possibility, as the order of $a(t)\frac{d}{dt}(t^n)$ is $n+r-1 <2n <m$, which implies that $n$ is the only possible value of $n+r-1$. Note that $ r=1$ is possible even if $n < m < 2n$. But in that case, it is not the only possibility. To see this, suppose $r>1$. 
%; and, in particular, $n<m<2m$.\\
{\bf Case 1 : $r =1$}\\
By slight abuse of notation, we also use $D$ to denote the $k$-derivation $ a(t)\frac{d}{dt}$ of $k[[t]]$. With the order of $f$ being $n$, it follows from the discussion before {\it theorem \ref{AM}} that after a suitable change of variables, we may assume that $f$ is of the form $Y^n + f_1(X)Y^{n-1} + \dots + f_n(X)\in k[[X]][Y]$, where the $X$-order of each $ f_i$ is at least $i+1$. Since $D $ induces a derivation of $R$, $f_xD(x) + f_yD(y)=0$, where $f_x,f_y$ are images of the usual partial derivatives $f_X,f_Y$ under the natural projection $k[[X,Y]]\to k[[x,y]]$.  As $\overline R\cong k[[t]]$, we identify $R$ with a subring of $k[[t]]$ using a parametric representation $x=t^n$ and $y=t^m + \textit{higher-degree terms}$, where $m>n$ is a positive integer, which is not a multiple of $n$.\\ 
%A derivation $D$ of $ R $ can be written as $a(t)\frac{d}{dt}$ when considered as a derivation of $k[[t^n, t^m + \dots]]$. 
If the order of $a(t)$ is $1$, then $a(t)\frac{d}{dt}(x)$ has order $n$ and $a(t)\frac{d}{dt}(y)$ has order $ m $. Therefore, if we write $D(x)$ and $D(y)$ as power series in $x,y$, then $D(x) = \lambda_1x + \varphi$  and $D(y) = \lambda_2y + \psi$, where $\lambda_1,\lambda_2\in k$ and nonzero constants and $\varphi,\psi$ are power series in $x,y$ of order at least $2$.
%(Note that according to our assumption $n$ does not divide $m$.). 
As $\big(\lambda_1x + \varphi(x,y)\big)f_x + \big(\lambda_2y + \psi(x,y)\big)f_y =0$, the power series $\big(\lambda_1X + \varphi(X,Y)\big)f_X + \big(\lambda_2Y + \psi(X,Y)\big)f_Y\in k[[X,Y]]$ is a multiple of $f$. From the description of $f$, it is clear that the order of $f_X$ is at least $n$ and the order of $f_Y \text{ is } n-1$. Therefore, the order of $\big(\lambda_1X + \varphi(X,Y)\big)f_X + \big(\lambda_2Y + \psi(X,Y)\big)f_Y$ is $n$, which implies that $\big(\lambda_1X+\varphi(X,Y)\big)f_X + \big(\lambda_2Y + \psi(X,Y)\big)f_Y=uf$ for some unit $u\in (k[[X, Y]])^*$. Therefore, $f$ is contained in $J_f$, the Jacobian ideal of $f$; consequently, it follows from {\it theorem \ref{risler}}, that $f$ is quasi-homogeneous, which is a contradiction. Therefore, we conclude that $r>1$.\\
{\bf Case 2 : $1<r< n$}\\ %, $n=2(r-1)$ and $m=3(r-1)$}\\
Since $f$ is not quasi-homogeneous, let $x=t^n$, $y=t^m+bt^{\lambda}+\textit{higher-degree terms}$ be a short presentation of $R$, as in {\it theorem \ref{Z0}\textcolor{blue}{(iii)}}. It follows from {\it theorem \ref{Z0}\textcolor{blue}{(iv)}} %{\it Lemma 5} of Zariski's paper 
that if $r\in R$ is a nonzero element whose order cannot be written as a non-negative linear combination of $m$ and $n$, then $\text{ord}_t(r)>2m$. Now, the orders of $a(t)\frac{d}{dt}(x)$ and $a(t)\frac{d}{dt}(y)$ are $n+r-1$ and $m+r-1$, respectively. Since $r>1$ and $n+r-1,m+r-1$ are smaller than $2m$, it is easy to see that $n+r-1=m$ and $m+r-1=2n$. Solving the equations, we get $n=2(r-1)$ and $m=3(r-1)$. Moreover, we can assume that $r\ge 3$; for if $r=2$ then $n=2$ and $m=3$, which contradicts our assumption that $r<n$.\\ 
Note that $\mathbb N_{m,n}=\{s(r-1)\ |\ s\in \mathbb N\setminus\{1\}\}$. 
%We know that, in this case, $n=2(r-1)$ and $m=3(r-1)$. 
%Let $q$ be the smallest positive integer of $v(R)$ that cannot be written as a non-negative linear combination of $m$ and $n$.
%$\mathcal S$ be the set of nonzero elements of $v(R)$, and let $q$ be the smallest element of $\mathcal S$ that cannot be written as a nonnegative linear combination of $m$ and $n$. 
The $t$-order of $y^2-x^3$ is $\lambda+m$, which implies that $\lambda+m\in v(R)$. If $\lambda+m\in \mathbb N_{m,n}$, then $\lambda +m=r'(r-1)$ for some $r'\ge 7$. But then $\lambda=(r'-3)(r-1)$ can also be written as a non-negative linear combination of $m$ and $n$, which contradicts the fact that $\lambda\notin v(R)$. Therefore, $\lambda+m$ cannot be written as a non-negative linear combination of $m$ and $n$; in fact, since $n=2(r-1)$ and $m=3(r-1)$, it follows from {\it theorem \ref{Z0}\textcolor{blue}{(iv)}} that $\lambda+m$ is the smallest element of $v(R)\setminus \mathbb N_{m,n}$. Now, the $t$-order of $a(t)\frac{d}{dt}(y^2-x^3)$ is $\lambda+m+r-1=\lambda+2n$. As $\lambda+2n>7(r-1)$, if $\lambda+2n\in \mathbb N_{m,n}$ then $\lambda\in \mathbb N_{m,n}$, which is a contradiction. Therefore, if $\lambda+2n$ cannot be written as a non-negative linear combination of $m$ and $n$. %, which is not a non-negative linear combination of $m, n$. 
If we can show that $\lambda+2n\notin v(R)$, then we will get a contradiction.\\
%We want to show that $m_1+2n \notin G$, thereby getting a contradiction.
%If $q< q^{\prime} \in G$ is such that $q^{\prime}$ is not a  combination of $m, n$, we show that $q^{\prime} - q \geq n$.\\ 
A positive integer $n_0 \ge 2(r-1)$ is divisible by $r-1$ if and only if it is a non-negative linear combination of $m$ and $n$.
% 
% 
%Write $x = t^n, y = t^m + c_1t^{m_1} + \text{higher degree terms}, m_1 > m$. 
If $x^iy^j\in R$ is a non-constant monomial, then the $t$-order of $x^iy^j$ is divisible by $r-1$. Therefore, an element of $R$ whose $t$-order is not divisible by $r-1$ must contain at least two monomials.\\%be of the form $\mu_1x^{i_1}y^{j_1} + \mu_2x^{i_2}y^{j_2} +\dots $, i.e., it cannot be a monomial.%If t-order of such an element is not of the form \alpha m + \beta n then at least two different monomial must have same t-order. \noindent
If $h \in R\setminus k$, then we can write $h$ as \[h(x ,y) = h_1(x ,y) + h_2(x, y) + h_3(x,y)+\dots\ ,\] where, for each $i$, $h_i$ is the sum of all monomials of $h$ that have some particular $ t $-order, say $s_i$, such that $s_1 < s_2 < s_3<\dots$ (Note that every $h_i$ is a polynomial in $x,y$, and $i<j$ does not imply $\text{ord}_t(h_i)< \text{ord}_t(h_j)$ in general.). If the $ t $-order of $h$ is not a non-negative linear combination of $m,n$, then the same is true for the $ t $-order of $h_1$; and this is possible only if $h_1$ contains at least two monomials or, equivalently, only if $s_1$ can be written as a non-negative linear combination of $m$ and $n$ in more than one way. The image of a typical monomial like $x^iy^j$ is \[\big(t^{2(r-1)}\big)^i \big(t^{3(r-1)}\big)^j\big(1 + c_1t^{\lambda -m} + \textit{higher-degree terms}\big)^j\]
\[= t^{(2i +3j)(r-1)}(1 + jc_1t^{\lambda -m} +\textit{higher-degree terms}) \ \ \ldots\ldots \ (*).\] 
%N
We know that $\lambda+m$ is the smallest positive integer of $v(R)$ that is not a multiple of $r-1$. Since $\lambda+2n$ is not a multiple of $r-1$, to prove that $\lambda+2n\notin v(R)$, it suffices to show that if $q$ is the smallest positive integer of $v(R)\setminus\{\lambda+m\}$ which is not a multiple of $r-1$, then $q>\lambda+2n$.\\ To prove it, let $\psi\in R$ be an element whose $t$-order is $q$. As above, we can write $\psi$ as
\[\psi(x ,y) = \psi_1(x ,y) + \psi_2(x, y) + \psi_3(x,y)+\dots\ ,\] where, for each $i$, $\psi_i$ is the sum of all monomials of $\psi$ that have the same $ t $-order, say $d_i$, such that $d_1 < d_2 < d_3<\dots$ . It follows from $(*)$ that, for each $i$, if the $t$-order of $\psi_i$ is not equal to $d_i$, then it is at least $d_i+\lambda-m$. As the $t$-order of $\psi$, namely $q$, is not a multiple of $r-1$, the same is true for the $t$-order of $\psi_1$, which implies that $\psi_1$ is a sum of two or more monomials. Therefore, $d_1$ can be written as non-negative linear combinations of $m,n$ in more than one way. The smallest positive integer for which this is possible is $6(r-1) = 3n = 2m$.\\ 
If $d_1=6(r-1)$, then $\psi_1(x,y)=aX^3+bY^2$ for some nonzero $a,b\in k$. In this case, the $t$-order of $\psi_1$ is $\lambda+m$, since it is not a multiple of $r-1$. For all $i\ge 2$, if a monomial in $t$ whose degree is not a multiple of $r-1$ appears in the image of $\psi_i$, then it follows from $(*)$ that the $t$-order of that monomial is at least $d_i+\lambda-m\ge 7(r-1)+\lambda-m=\lambda+4(r-1)>\lambda+m$. As a result, the $t$-order of $\psi$ would be $\lambda+m<q$, which is a contradiction. Therefore, $d_1>6(r-1)$ is a positive integer that can be written as non-negative linear combinations of $m,n$ in more than one way. Since $7$ can be uniquely written as a non-negative linear combination of $2$ and $3$, it follows that $d_1\ge 8(r-1)$. Therefore, by $(*)$, $q\ge 8(r-1)+\lambda-m=\lambda+5(r-1)>\lambda+2n$, which shows that $\lambda+2n\notin v(R)$. Hence, we conclude that $r$ cannot be smaller than $n$.\\
Finally, we show that the $t$-order of $a(t)$ cannot be equal to $n$.\\
If possible, let $a(t)\in k[[t]]$ be a nonzero power series of order $n$ such that $D:=a(t)\frac{d}{dt}$ induces a derivation of $R$. Since $x=t^n$ and $y=t^m+\textit{higher-degree terms}$, the $t$-orders of $D(x)$ and $D(y)$ are $2n-1$ and $m+n-1$, respectively. It follows from {\it theorem \ref{Z0}\textcolor{blue}{(iv)}} that $2n-1$ is a non-negative linear combination of $m$ and $n$, which implies that $m=2n-1$. Since $m+n-1=3n-2<2m$, we can again apply {\it theorem \ref{Z0}\textcolor{blue}{(iv)}} to conclude that $3n-2$ is a non-negative linear combination of $m$ and $n$. Therefore, $3n-2=2n$, which implies that $n=2$ and $m=3$. Consequently, by {\it remark \ref{rem}\textcolor{blue}{(d)}}, $f$ is quasi-homogeneous; in fact, $f=V^2-U^3$ for a suitable pair of variables $U,V\in k[[X,Y]]$. This is a contradiction. Therefore, we conclude that $r\ge n+1$, and this completes the proof.
%a$3$, $q'>7(r-1)$. Therefore, $q'+m_1-m>m_1+4(r-1)=m_1+2n$, which completes the proof.
\end{proof}

\begin{remark}
\label{rem1}
The lower bound given on the $t$-order of $a(t)$ in {\it theorem \ref{main}} is sharp. To show this, for each $n\ge 3$, we construct a family of non-quasi-homogeneous plane algebroid curves such that every member of the family has multiplicity $n$ and admits a derivation of the form $a(t)\frac{d}{dt}$, where $a(t)$ has a $t$-order $n+1$. We consider two cases, where $n=3$ and where $n\ge 4$. We follow the set-up and notation as in {\it theorem \ref{main}}.\\
{\bf $n=3$ :} Let $R$ be the plane algebroid curve with parametric representation $x=t^3, y=t^m+bt^{2m-6}$, where $m\ge 7$ is not divisible by $3$ and $b\in k$ is a nonzero constant. Let $D:k[[t]]\to k[[t]]$ be the $k$-derivation defined as $D:=(t^4+\alpha t^{m-2}+b\alpha t^{2m-8})\frac{d}{dt}$, where $\alpha:=\frac{b(6-m)}{m}$. In this example, $\lambda=2m-6$ and $c=2m-2$. As $\lambda+3=2m-3\notin v(R)$, $R$ is not quasi-homogeneous by {\it theorem \ref{Z0}\textcolor{blue}{(iii)}}. It is easy to check that $D(x)=3x^2+3\alpha y$, and $D(y)-mxy$ has a $t$-order $\ge 2m-2$. Since $R$ contains $t^i$ for all $i\ge 2m-2$, it follows that $D(x),D(y)-mxy\in R$. Therefore, $D$ induces a derivation of $R$.\\
{\bf $n\ge 4$ :} Let $R$ be the plane algebroid curve with parametric representation $x=t^n, y=t^m+bt^{mn-m-2n}$, where $m>n$ is relatively prime to $n$ and $b\in k$ is a nonzero constant. Let $D:k[[t]]\to k[[t]]$ be the derivation given by $\big(t^{n+1}+\frac{b}{m}\big(2(m+n)-mn\big)t^{mn-2m-n+1}\big)\frac{d}{dt}$. In this example, $\lambda=mn-m-2n$ and $c=mn-m-n+1$. As $\lambda+n=mn-m-n\notin v(R)$, $R$ is not quasi-homogeneous by {\it theorem \ref{Z0}\textcolor{blue}{(iii)}}. It is easy to check that $D(x)-nx^2-\frac{nb(2m+2n-mn)}{m}y^{n-2}$ and $D(y)-mxy$ have $t$-orders $\ge c$. Since $R$ contains $t^i$ for all $i\ge c$, it follows that $D(x)-nx^2-\frac{nb(2m+2n-mn)}{m}y^{n-2},D(y)-mxy\in R$. Consequently, $D$ induces a derivation of $R$.
%with the set-up and notation as in {\it theorem \ref{main}}, {\it let $R$ be the plane algebroid curve with parametric representation $x=t^4$, $y=t^7+bt^{13}$, where $b\in k$ is a nonzero element; then the derivation $D:k[[t]]\to k[[t]]$, defined as $D:=(t^5-\frac{6b}{7}t^{11})\frac{d}{dt}$, induces a derivation of $R$}.\\In this example, we have $n=4,m=7,\lambda=13$ and $c=18$. Therefore, $t^i\in R$ for all $i\ge 18$. Since $\lambda+n=17\notin v(R)$, it follows from {\it theorem \ref{Z0}\textcolor{blue}{(iii)}} that $R$ is not quasi-homogeneous. Now, an easy computation shows that $D(x)-4x^2+\frac{24b}{7}y^2$ and $D(y)-7xy$ have $t$-orders $\ge 20$, implying that $D(x),D(y)\in R$. Here, the $t$-order of $a(t)$ is $5$, which is much smaller than $c-n+1=15$.   
\end{remark}

\section{A result for the value semi-group $v(R)$}
\label{sec:4}

In this section, we prove a result for the value semi-group of a plane algebroid curve, which is inspired by case 2 in the proof of {\it theorem \ref{main}}. Our result, proved using the generators of $v(R)$ as described in \cite{A-M}, is more general and of independent interest.

As usual, let $f\in k[[X,Y]]$ be an irreducible power series of order $n\ge 2$ and let $R=k[[x,y]]:=\frac{k[[X,Y]]}{(f)}$ be the plane algebroid curve defined by $f$. In view of the discussion before {\it theorem \ref{AM}}, we may assume that $f$ is of the form $f=Y^n+a_iY^{n-1}+\dots+a_{n-1}Y+a_n\in k[[X]][Y]$, where $a_i\in X^{i+1}k[[X]]$ for all $i$. Suppose $f$ is not quasi-homogeneous. Then, by {\it theorem \ref{Z0}\textcolor{blue}{(iii)}}, there exists a short parametric representation of $R$ as 
\[x=t^n,\ y=t^m+bt^\lambda+\sum_{i=2}^{\frac{c}{2}}b_it^{e_i},\]
where $b\in k$ is nonzero and $\lambda+n\notin v(R)$. However, $\lambda+2n$ can be contained in $v(R)$, as can be seen from the example of the plane algebroid curve with short representation $x=t^4,y=t^7+t^{13}$.

We will prove that if, moreover, it is assumed that $\text{g.c.d.}(m,n)$ does not divide $\lambda$, then $\lambda+2n\notin v(R)$ (our assumption implies that $m$ and $n$ are not relatively prime). First, we prove a more general result for value semi-groups.

\begin{proposition}
\label{4prop}
Let $f\in k^{[[2]]}$ be an irreducible power series of order $n\ge 2$ and let $R:=\frac{k^{[[2]]}}{(f)}$ be the plane algebroid curve defined by $f$. Without loss of generality, we assume that $f=Y^n+a_1Y^{n-1}+\dots+a_{n-1}Y+a_n\in k[[X]][Y]$ for a pair of variables $X,Y\in k^{[[n]]}$ such that $a_i\in X^{i+1}k[[X]]$ for all $i=1,\dots,n$. We assume that $f$ is not quasi-homogeneous, so $R$ has a parametric representation of the form $x=t^n,y=t^m+bt^{\lambda}+\textit{higher-degree terms}$, where $b$ is nonzero and $\lambda+n\notin v(R)$. Suppose $v(R)$ is not generated by $m,n$ or, equivalently, $\text{g.c.d.}(m,n)\neq 1$. If $(m_i)$ and $(d_i)$ are the sequences of positive integers as defined in {\it theorem \ref{AM}}, then $m_2+2n\notin v(R)$.  
    
\end{proposition}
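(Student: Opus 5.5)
The plan is to use the Abhyankar--Moh description of $v(R)$ from \emph{theorem \ref{AM}} and show that $N:=m_2+2n$ has no representation as a non-negative combination of the generators $r_0,\dots,r_h$. Since $\text{g.c.d.}(m,n)=d_2\neq 1$, we have $h\ge 2$ and $m_2$ is defined. From the formula in \emph{theorem \ref{AM}},
\[r_0=n,\quad r_1=m,\quad r_2=\frac{md_1+(m_2-m)d_2}{d_2}=\frac{mn}{d_2}+m_2-m .\]
I will use the standard fact that $d_{i+1}=\text{g.c.d.}(r_0,\dots,r_i)$; in particular $d_2$ divides $r_0$ and $r_1$. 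Also $m_2$ is not divisible by $d_2$, by the definition of $m_2$, so $d_2$ does not divide $N$. Hence any representation $N=\sum_i \alpha_i r_i$ with $\alpha_i\in\mathbb N$ must have $\alpha_i>0$ for some $i\ge 2$.

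\textbf{Step 1: only $r_2$ can occur, and only once.} Because $n$ does not divide $m$, $d_2$ is a proper divisor of $n$, so $n/d_2\ge 2$. This gives $r_2\ge 2m+m_2-m=m+m_2$. Therefore
\[2r_2\ge 2m+2m_2>2n+m_2=N,\]
using $m>n$ and $m_2>0$. Since $r_2<r_3<\dots<r_h$, any $r_i$ with $i\ge 3$ satisfies $r_i>r_2$. To exclude such $r_i$ entirely, I would prefer the sharper bound $r_3\ge 2r_2>N$, which follows from
\[r_3=\frac{d_2}{d_3}\,r_2+(m_3-m_2)\quad\text{and}\quad \frac{d_2}{d_3}\ge 2.\]
With this, the representation must be $N=r_2+\alpha_0 n+\alpha_1 m$.

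\textbf{Step 2: the remainder.} The remainder is
\[N-r_2=2n+m-\frac{mn}{d_2}\le 2n+m-2m=2n-m<n .\]
A non-negative combination of $n$ and $m$ that is smaller than $n$ must be $0$. So $N\in v(R)$ would force $N-r_2=0$, that is, $m\big(\tfrac{n}{d_2}-1\big)=2n$.

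\textbf{Step 3: this equation has no admissible solution.} Write $n=d_2e$ and $m=d_2g$ with $\text{g.c.d.}(e,g)=1$ and $e\ge 2$. The equation becomes $g(e-1)=2e$. Since $\text{g.c.d.}(e-1,e)=1$, we need $e-1\mid 2$, so $(e,g)=(2,4)$ or $(3,3)$. Both are excluded because $g$ and $e$ must be coprime. This contradiction shows $N\notin v(R)$.

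I expect the main obstacle to be the bookkeeping around the generators. Specifically, one has to justify $d_{i+1}=\text{g.c.d.}(r_0,\dots,r_i)$, or at least that $d_2$ divides $r_0$ and $r_1$, and the recursion $r_{i+1}=\frac{d_i}{d_{i+1}}r_i+(m_{i+1}-m_i)$. Both follow directly from the defining formula for $r_i$ in \emph{theorem \ref{AM}} by induction. I would verify these first, since the rest is the elementary inequality chain above.
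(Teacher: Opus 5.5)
Your argument is correct and is essentially the paper's proof. Both use the Abhyankar--Moh generators, discard $r_i$ for $i\ge 3$ by a size bound, use $d_2\nmid m_2+2n$ to force $r_2$ to appear exactly once, and then rule out the remaining combination of $n$ and $m$. The only difference is organizational: your recursion $r_3=\frac{d_2}{d_3}r_2+(m_3-m_2)>2r_2$ and the single equation $g(e-1)=2e$ replace the paper's case splits $d_3=1$ versus $d_3>1$ and $n\ge 3d_2$ versus $n=2d_2$.
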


\begin{proof}
Keeping the same set-up and notation as in {\it theorem \ref{AM}}, we explicitly describe the first few terms of the two sequences $(m_i)$ and $(d_i)$, and $r_0,\dots,r_h$, the generating set of $v(R)$, which will be used in the proof. Recall that 
\[
m_0:=d_1:=n, m_1:=m,d_2:=\text{g.c.d.}(n,m_1), d_3:=\text{g.c.d.}(n,m_1,m_2),\]
\[r_0:=n,r_1:=m_1,r_2:=\frac{nm_1}{d_2}+m_2-m_1 \text{,  }\ \text{and } r_3:=\frac{nm_1+(m_2-m_1)d_2+(m_3-m_2)d_3}{d_3}\ .\]
According to our assumption, $d_2:=\text{g.c.d.}(n,m_1)>1$. We consider two cases, where $d_3=1$ and where $d_3>1$.\\
{\bf Case 1 : $d_3=1$}\\
Since $d_2>d_3=1$, it follows from {\it theorem \ref{AM}} that $h=2$ and $v(R)$ is generated by $r_0=n,r_1=m_1$ and $r_2=\frac{nm_1}{d_2}+m_2-m_1$. Since $n$ does not divide $m_1$, $n\ge 2d_2$.\\ 
If $n\ge 3d_2$, then $r_2\ge 3m_1+m_2-m_1=2m_1+m_2>m_2+2n$. Therefore, in this case, if $m_2+2n\in v(R)$, then $m_2+2n$ can be written as a non-negative linear combination of $n$ and $m_1$. But then $m_2$ is divisible by $d_2$, which contradicts the choice of $m_2$.\\ 
Therefore, we can assume that $n=2d_2$. Then $r_2=m_2+m_1$. If $m_2+2n\in v(R)$, then there exist non-negative integers $a_1,a_2,a_2$ such that $m_2+2n=a_1n+a_2m_1+a_3(m_2+m_1)$. Since $m_2$ is not divisible by $d_2$, $a_3$ must be nonzero. If $a_3>1$, then the right hand side is bigger than $m_2+2n$, which implies $a_3=1$. Therefore, $2n=a_1n+(a_2+1)m_1$. Since $m_1>n$, $a_2$ must be $0$, which implies that $2n=a_1n+m_1$. So, $a_1=0$ or $1$. Since $n$ does not divide $m_1$, $a_1$ cannot be $0$; and if $a_1=1$ then $m_1+n>2n$. In either case, we get a contradiction. Therefore, if $d_3=1$ then $m_2+2n\notin v(R)$.\\
{\bf Case 2 : $d_3>1$}\\
Since $d_1:=n>d_2>d_3>1$ and each $d_i$ is a multiple of $d_{i+1}$, $d_2\ge 2d_3$ and $n\ge 4d_3$. Therefore, \[r_3:=\frac{m_1n+(m_2-m_1)d_2}{d_3}+m_3-m_2\ge 4m_1+2(m_2-m_1)+m_3-m_2=2m_1+m_2+m_3>m_2+2n.\]
Since $r_0<r_1<r_2<\dots<r_h$ generate $v(R)$ and $r_3>m_2+2n$, it follows that if $m_2+2n\in v(R)$ then it must be a non-negative linear combination of $r_0=n,r_1=m_1$ and $r_2$. But that is not possible, as already proved in {\it case 1}. \\
Combining the two cases, we conclude that $m_2+2n\notin v(R)$.
\end{proof}

\begin{corollary}
\label{4cor}
Let $f\in k[[X,Y]]$ be an irreducible power series of order $n\ge 2$. Suppose $f$ is not quasi-homogeneous and $R=k[[x,y]]:=\frac{k[[X,Y]]}{(f)}$ has a parametric representation $x=t^n,y=t^m+bt^\lambda+\textit{higher-degree terms}$ as in {\it theorem \ref{Z0}\textcolor{blue}{(iii)}}. If $\text{g.c.d.}(m,n)$ does not divide $\lambda$, then $\lambda+2n\notin v(R)$.    
\end{corollary}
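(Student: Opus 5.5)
The plan is to reduce the corollary to Proposition \ref{4prop} by showing that, under the hypothesis that $\text{g.c.d.}(m,n)$ does not divide $\lambda$, the exponent $\lambda$ coincides with $m_2$, the second characteristic exponent of Theorem \ref{AM} computed from this parametric representation.

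First, since $\text{g.c.d.}(m,n)=d_2$ does not divide $\lambda$, we have $d_2\neq 1$. Hence $m,n$ are not relatively prime, and by Remark \ref{rem}(a) the semigroup $v(R)$ is not generated by $m$ and $n$. This is exactly the extra hypothesis needed in Proposition \ref{4prop}.

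Next we pin down $m_2$. We apply Theorem \ref{AM} to the given representation $x=t^n$, $y=t^m+bt^\lambda+\dots$. This requires $f$ in the Weierstrass form of the proposition, which we may assume after a change of variables, as explained before Theorem \ref{AM}. The parametrization is the same, and the proposition is stated for any such parametric representation. By definition,
\[
m_2=\min\{j\in J \mid d_2\nmid j\},
\]
where $J$ is the support of $y(t)$. The smallest element of $J$ exceeding $m$ is $\lambda$, because the coefficient of $t^\lambda$ is $b\neq 0$ and there are no terms between $t^m$ and $t^\lambda$. Since $d_2\mid m$ and $d_2\nmid\lambda$, the minimum is attained at $\lambda$, so $m_2=\lambda$.

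Finally, Proposition \ref{4prop} gives $m_2+2n\notin v(R)$, that is, $\lambda+2n\notin v(R)$.

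The main point to check is that the representation from Theorem \ref{Z0}(iii) has no terms between $t^m$ and $t^\lambda$. This holds because a short representation has $y=t^m+\sum b_it^{e_i}$ with $e_1=\lambda$ the first exponent after $m$. The remaining check is that Proposition \ref{4prop} applies to this particular representation. Both steps are routine.
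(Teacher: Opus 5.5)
Your proposal is correct and is the paper's argument: the paper also notes that $m_2=\lambda$ because $\lambda$ is the exponent immediately after $m$ and $\text{g.c.d.}(m,n)=d_2$ divides $m$ but not $\lambda$, and then applies Proposition \ref{4prop}. You additionally spell out that $d_2\neq 1$, which the proposition needs as a hypothesis and which the paper leaves implicit.
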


\begin{proof}
Since $\text{g.c.d.}(m,n)$ does not divide $\lambda$,we have $m_2=\lambda$. Therefore, the assertion immediately follows from {\it proposition \ref{4prop}}. 
\end{proof}

\begin{remark}
\label{4rem}
\begin{enumerate}
\item With the set-up and notation as in case 2 of the proof of {\it theorem \ref{main}}, it was shown that $\text{g.c.d}(m,n)=r-1$ does not divide $\lambda$. Therefore, it immediately follows from {\it corollary \ref{4cor}} that $\lambda+2n\notin v(R)$. This gives a more conceptual proof of case 2 of the proof of {\it theorem \ref{main}}.
\item Keeping the set-up and notation as in {\it corollary \ref{4cor}}, we give an example to show that the condition `$\text{g.c.d.}(m,n)$ does not divide $\lambda$' can be dropped in general.\\
Let $R:=k[[x,y]]$ be the plane algebroid curve with parametric representation $x=t^{4d}$, $y=t^{5d}+t^{7d}+t^{11d-1}$, where $d>1$ is a positive integer. Let $\varphi$ be a nonzero power series in $x,y$. If the order of $\varphi$, as a power series in $x,y$, is $\le 1$, then the $t$-order of $\varphi$ is at most $5d$. On the other hand, if the order of $\varphi$, as a power series in $x,y$, is $\ge 3$, then the $t$-order of $\varphi$ is at least $12d$. Finally, if the order of $\varphi$ is $2$ as a power series in $x,y$, then the $t$-order of $\varphi$ is $8d$ or $9d$ or $10d$. Therefore, $7d,11d-1,11d\notin v(R)$, and the parametric representation given is a short representation of $R$, with $n=4d,m=5d$ and $\lambda=7d$. Note that $d=\text{g.c.d.}(m,n)$ divides $\lambda$ and $m_2=11d-1$. Here $\lambda+n=11d\notin v(R)$, but $\lambda+2n=15d\in v(R)$.
\end{enumerate}

%are relatively prime positive integers $>1$ such that $e>34d$. Here $n=7d$, $m=8d$ and $\lambda=34d$. So, $m,n$ do not generate $v(R)$, but $\text{g.c.d.}(m,n)=d$ divides $\lambda$. It is clear that $\lambda+2n=48d\in v(R)$.Let $R$ be the plane algebroid curve with parametric representation $x=t^{7d}$, $y=t^{8d}+t^{34d}+t^e$, where $d,e$ are relatively prime positive integers $>1$ such that $e>34d$. Here $n=7d$, $m=8d$ and $\lambda=34d$. So, $m,n$ do not generate $v(R)$, but $\text{g.c.d.}(m,n)=d$ divides $\lambda$. It is clear that $\lambda+2n=48d\in v(R)$.

\end{remark}

So far, we have seen many examples of plane algebroid curves of multiplicity $n\ (\ge 2)$ that admit derivations of the form $a(t)\frac{d}{dt}$, where the $t$-order of $a(t)$ is $n+1$. However, this is not true in general. Below we construct a class of plane algebroid curves for which this fails. 

\begin{proposition}
    \label{4prop2}
    Let $2\le n<m$ be positive integers such that $m,n$ are not relatively prime and $n$ does not divide $m$. Let $R:=k[[x,y]]:=\frac{k[[X,Y]]}{(f)}$ be an irreducible plane algebroid curve with parametric representation $x=t^n,y=t^m+bt^{m+1}+\textit{higher-degree terms}$, where $b\in k$ is a nonzero constant. If $a(t)\in \overline{R}=k[[t]]$ is a nonzero power series such that $a(t)\frac{d}{dt}$ induces a derivation of $R$, then the $t$-order of $a(t)$ is at least $n+2$. 
    %Then $R$ is not quasi-homogeneous by {\it theorem \ref{Z0}\textcolor{blue}{(iii)}}. 
\end{proposition}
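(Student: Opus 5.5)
The plan is to combine Theorem \ref{main} with a direct coefficient comparison, using Theorem \ref{AM} to control the small elements of $v(R)$. Since $\text{g.c.d.}(m,n)\neq 1$, Remark \ref{rem}(a) says that $v(R)$ is not generated by $m,n$. By Remark \ref{rem}(e), $f$ is therefore not quasi-homogeneous. Theorem \ref{main} then gives $\text{ord}_t(a)\ge n+1$, so it suffices to rule out $\text{ord}_t(a)=n+1$.

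\textbf{Step 1 (small values of $v(R)$).} Let $d:=\text{g.c.d.}(m,n)\ge 2$. Since $d\mid m$, $d$ does not divide $m+1$, and $t^{m+1}$ occurs in $y$ with nonzero coefficient $b$. Hence in Theorem \ref{AM} we get $d_2=d$ and $m_2=m+1$, so $r_2=\frac{nm}{d}+1$. Because $n$ does not divide $m$, we have $n/d\ge 2$, hence $r_2\ge 2m+1$. The generators $r_0=n$ and $r_1=m$ are multiples of $d$, and all other generators are $\ge r_2$. Therefore every element of $v(R)$ smaller than $2m+1$ is a multiple of $d$. In particular $2n+1\notin v(R)$ and $m+n+1\notin v(R)$: both are less than $2m+1$ and congruent to $1$ modulo $d$.

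\textbf{Step 2 (the coefficient of $t^{n+2}$ vanishes).} Suppose $a(t)=\alpha t^{n+1}+\beta t^{n+2}+\cdots$ with $\alpha\neq 0$ induces a derivation $D$ of $R$. Then
\[
D(x)=n\alpha t^{2n}+n\beta t^{2n+1}+\cdots,
\]
so $D(x)-n\alpha x^2\in R$ has the form $n\beta t^{2n+1}+\cdots$. If $\beta\neq 0$, this element has $t$-order $2n+1\notin v(R)$, a contradiction. Hence $\beta=0$.

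\textbf{Step 3 (contradiction from $D(y)$).} Since $dy/dt=mt^{m-1}+(m+1)bt^m+\cdots$ and $\beta=0$, we get
\[
D(y)=\alpha m t^{m+n}+\alpha(m+1)b\,t^{m+n+1}+\cdots.
\]
On the other hand, $\alpha m\,xy=\alpha m t^{m+n}+\alpha m b\,t^{m+n+1}+\cdots$. The difference $D(y)-\alpha m\,xy\in R$ therefore has $t^{m+n+1}$-coefficient $\alpha b\neq 0$. So its $t$-order is $m+n+1$, which contradicts Step 1. Hence $\text{ord}_t(a)\ge n+2$.

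The main point needing care is Step 1. One must check that $m_2=m+1$ really holds, and that no generator $r_i$ with $i\ge 2$ drops below $2m+1$. The latter uses $r_0<r_1<\cdots<r_h$ from Theorem \ref{AM} together with the bound $r_2\ge 2m+1$. It is also essential to kill $\beta$ in Step 2 before Step 3, since $\beta$ would otherwise contribute $\beta m\,t^{m+n+1}$ and could cancel the term $\alpha b\,t^{m+n+1}$.
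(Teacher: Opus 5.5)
Your proof is correct and follows essentially the same route as the paper: Theorem \ref{main} gives $\text{ord}_t(a)\ge n+1$, and the case $\text{ord}_t(a)=n+1$ is ruled out by comparing $D(x)-n\alpha x^2$ and $D(y)-\alpha m\,xy$ with the gaps $2n+1$ and $m+n+1$ of $v(R)$. The differences are minor. You kill the $t^{n+2}$-coefficient first via $D(x)$ and then use $D(y)$, whereas the paper does it in the opposite order. You also obtain both gaps directly from $r_2=\frac{mn}{d}+1\ge 2m+1$. The paper instead calls $m+n+1\notin v(R)$ an elementary computation and reaches the $2n+1$ contradiction indirectly, via $m+2n+1\notin v(R)$ from Corollary \ref{4cor}.
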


\begin{proof}
    An elementary computation shows that $m+n+1\notin v(R)$, the value semi-group of $R$. Consequently, by {\it theorem \ref{Z0}\textcolor{blue}{(iii)}}, $R$ is not quasi-homogeneous. Suppose $a(t)$ is a nonzero power series such that $D:=a(t)\frac{d}{dt}$ induces a derivation of $R$. By {\it theorem \ref{main}}, the $t$-order of $a(t)$ is at least $n+1$. If $a(t)=t^{n+1}$, then $D(y)-mxy$ has a $t$-order $m+n+1$, which is a contradiction. Therefore, we can write $a(t)$ as $a(t)=t^{n+1}+\alpha t^s+\textit{higher degree terms}$, where $n+1<s$ and $\alpha\in k$ is a nonzero constant. If $s>n+2$, then $D(y)-mxy$ has a $t$-order $m+n+1$, which is a contradiction. Therefore, $s=n+2$. As $D(x)-nx^2\in R$, it follows that $s+n-1=2n+1\in v(R)$. But then $m+1+2n\in v(R)$, which contradicts {\it corollary \ref{4cor}}. Therefore, the $t$-order of $a(t)$ must be at least $n+2$.     
    %Since $s\ge n+2$, it follows that $s+m-1\ge \lambda+n$. If $s+m-1>\lambda+n$
\end{proof}

%For example, if $d>1$ is a positive integer, then one may consider the irreducible plane algebroid curve $R$ with a parametric representation $x=t^{2d},y=t^{3d}+t^{3d+1}$. In this example, $n=2d,m=3d$ and $\lambda =3d+1$. An elementary computation shows that $4d+1,5d+1\notin v(R)$. As $\lambda+n=5d+1\notin v(R)$, by {\it theorem \ref{Z0}\textcolor{blue}{(iii)}}, $R$ is not quasi-homogeneous. If possible, let $a(t):=t^{2d+1}+\alpha t^{2d+2}+\beta t^{2d+3}+\dots\in \overline R=k[[t]]$ be a power series such that $D:=a(t)\frac{d}{dt}$ induces a derivation of $R$. If $a(t)=t^{2d+1}$, then $D(y)-3dxy=3dt^{5d+1}\in R$, which is a contradiction since $5d+1\notin v(R)$. Therefore, $a(t)\neq t^{2d+1}$. If $\alpha\neq 0$, then the $t$-order of $D(x)-2dx^2$ is $4d+1$. Since $4d+1\notin v(R)$, it follows that $\alpha=0$. Now, it is easy to check that the $t$-order of $D(y)-3dxy$ is $5d+1$, which is a contradiction, as $5d+1\notin v(R)$. Therefore, the $t$-order of $a(t)$ cannot be $n+1$. Note that in this example $m$ and $n$ are not relatively prime.\\
The contrasting nature between the examples of {\it remark \ref{rem1}} and the ones given above suggests that it is worth classifying plane algebroid curves with multiplicity $n$ that admit a derivation of the form $a(t)\frac{d}{dt}$, where the $t$-order of $a(t)$ is $n+1$. Since every quasi-homogeneous plane algebroid curve has this property, we can restrict our attention to plane algebroid curves which are not quasi-homogeneous. Also, in the above example $m$ and $n$ are not relatively prime. Therefore, we ask the following question.

{\bf Question.} Let $R=k[[x,y]]:=\frac{k[[X,Y]]}{(f)}$ be an irreducible plane algebroid curve of multiplicity $n\ge 2$. Let $x=t^n,y=t^m+\textit{higher-degree terms}$ be a Puiseux parametric representation of $R$, where $n$ does not divide $m$. Let $\overline{R}\cong k[[t]]$ be the integral closure of $R$ in its field of fractions. If $R$ is not quasi-homogeneous and $m,n$ are relatively prime, then must there exist a nonzero power series $a(t)\in k[[t]]$ of $t$-order $n+1$ such that $a(t)\frac{d}{dt}$ induces a $k$-derivation of $R$?

\begin{Acknowledgement}
The third author would like to acknowledge the financial support provided by the National Board for Higher Mathematics (NBHM), Department of Atomic Energy, Government of India, Ref No: $0203/13(36)/2021-R \& D-II/13163 $.
	 %The third author's research is funded by the National Board for Higher Mathematics (NBHM), Department of Atomic Energy (DAE), Government of India, Ref No: $ 0203/13(36)/2021-R \& D-II/13163 $.
\end{Acknowledgement}

\end{document}